\documentclass[11pt,a4paper]{article}
\usepackage[body={155mm,230mm},top=34mm]{geometry}
\usepackage{bm}
\usepackage{epsfig}
\usepackage{amsmath}
\usepackage{amssymb}
\usepackage{amsthm}
\usepackage{mathrsfs}
\usepackage{epsfig}
\usepackage{verbatim}
\usepackage{url}
\usepackage{color}
\usepackage{hyperref,enumitem}
\usepackage{authblk}

\usepackage{accents}

\newtheorem{Rem}{Remark}

\newtheorem{theorem}{Theorem}

\newtheorem{lemma}{Lemma}

\newtheorem{definition}{Definition}
\newtheorem{remark}{Remark}

\newtheorem{assumption}{Assumption}
\newtheorem{proposition}{Proposition}

\newcommand{\what}{\widehat}

\def \eps{\varepsilon}

\def \bR{\mathbb{R}}

\def \bL{\mathbb{L}}
\def \bE{\mathbb{E}}
\def \bF{\mathcal{F}}
\def \bF{\mathbb{F}}
\def \cY{\mathcal{Y}}
\def \bH{\mathbb{H}}

\def \bS{\mathbb{S}}

\def \cE{\mathcal{E}}

\def \cZ{\mathcal{Z}}

\def \cF{\mathcal{F}}

\def \cY{\mathcal{Y}}

\def \ds{\displaystyle}

\def \tpi{\widetilde{\pi}}

\def \bP{\mathbb{P}}
\def \1{\mathbf{1}}

\newcommand{\Ymin}{Y^{\min}}
\newcommand{\Zmin}{Z^{\min}}
\newcommand{\Pmin}{\psi^{\min}}
\newcommand{\Mmin}{M^{\min}}

\bibliographystyle{amsplain} 
\title{
Backward Stochastic Differential Equations with Non-Markovian
Singular Terminal Conditions with General Driver and Filtration\footnote{This work is supported by TUBITAK 
(The Scientific and Technological Research Council of Turkey) through project number 118F163. We are grateful for this support.}
}

\author[1]{Mahdi Ahmadi\thanks{mahdi.ahmadi@metu.edu.tr}}
\author[2]{Alexandre Popier\thanks{alexandre.popier@univ-lemans.fr}}
\author[3]{Ali Devin Sezer\thanks{devin@metu.edu.tr}}
\affil[1]{\small Institute of Applied Mathematics, Middle East Technical University}
\affil[2]{\small Laboratoire Manceau de Math\'ematiques, Le Mans Universit\'e, Avenue O.~Messiaen, 72085~Le~Mans cedex 9, France}
\affil[3]{\small Institute of Applied Mathematics, Middle East Technical University}

\date{\today}

\begin{document}
\maketitle
\begin{abstract}
We consider a class of Backward Stochastic Differential Equations with superlinear driver process $f$
adapted to a filtration supporting at least a $d$ dimensional Brownian motion and a Poisson random measure on ${\mathbb R}^m- \{0\}.$
We consider the following class of terminal conditions
$\xi_1 =  \infty \cdot {\bm 1}_{\{\tau_1 \le T\}}$ where $\tau_1$ is any stopping time with a bounded density in a neighborhood
of $T$ and
$\xi_2 = \infty \cdot {\bm 1}_{A_T}$ where $A_t$, $t \in [0,T]$ is a decreasing sequence of events
adapted to the filtration ${\mathcal F}_t$ that is continuous in probability at $T$.
A special case for $\xi_2$ is $A_T = \{\tau_2 > T\}$ where $\tau_2$ is any stopping
time such that $P(\tau_2 =T) =0.$
In this setting we prove that the minimal supersolutions of the BSDE are in fact solutions, i.e., they attain
almost surely their terminal values.
We further show that the first exit time from a time varying domain
of a $d$-dimensional diffusion process driven by the Brownian motion
with strongly elliptic covariance matrix does have a continuous density; therefore such exit times can be used
as $\tau_1$ and $\tau_2$ to define the terminal conditions $\xi_1$ and $\xi_2.$
The proof of existence of the density is based on the classical Green's functions for the associated PDE.
\end{abstract}

\section{Introduction and Definitions}
A stochastic differential equation with a prescribed terminal condition is called a backward stochastic differential 
equation (BSDE).
If a terminal condition can take the value $+\infty$ it is said to be singular.
BSDE with singular terminal conditions has received considerable attention
at least since \cite{popi:06}. 
They generalize diffusion-reaction partial differential equations (PDE) 
where the singularity of the terminal condition of the BSDE corresponds to singularities
in the final trace of the solution of the PDE (see \cite{grae:hors:sere:18,popi:06,popi:07,popi:16} and \cite{marc:vero:99}).
Moreover BSDE with a singularity at time $T$ are a key tool in the solution of optimal stochastic control problems with terminal constraints 
(see \cite{anki:jean:krus:13,grae:hors:sere:18,krus:popi:15} and the references therein). 
This type of control problem can be interpreted as an optimal liquidation problem in finance (see the preceding references and \cite{guea:16} for an overview).
Given a BSDE with a terminal condition $\xi$ at $T$, a process $Y$ satisfying the BSDE is said to be supersolution
if
\begin{equation} \label{eq:term_cond_super_sol}
\liminf_{t\to T} Y_t \geq \xi
\end{equation}
holds almost surely; $Y$ is said to be minimal if every other supersolution dominates it.
We say $Y$ solves the BSDE with singular terminal condition $\xi$ if
\begin{equation} \label{eq:termcondsol}
\lim_{t\to T} Y_t = \xi;
\end{equation}
i.e., to go from a supersolution to a solution we need to replace the $\liminf$ in \eqref{eq:term_cond_super_sol} with $\lim$ and
$\ge$ with $=$.
The condition \eqref{eq:termcondsol} means that the process $Y$ is continuous at time $T$; for this reason we refer to the problem
of establishing that a candidate solution satisfies \eqref{eq:termcondsol} as the ``continuity problem.''
We further comment on the distinction between {\em solutions}
and minimal supersolutions below, and, as we will explain shortly, minimal supersolutions
and their properties play a key role our analysis.
While minimal supersolutions of BSDE with singular terminal conditions is available in a general setting
(see \cite{krus:popi:15} and subsection \ref{ss:knownresults} below) 
{\em solutions} of BSDE with singular terminal conditions are mostly available
for Markovian
terminal conditions, i.e., terminal conditions which are deterministic functions of an underlying
adapted Markov process; see subsection \ref{ss:knownresults} for a summary of known results.

The first work to solve a BSDE with a non-Markovian singular terminal condition was \cite{krus:popi:seze:18}
treating
the following problem:
\begin{align}\label{e:earlierBSDE}
Y_t &= Y_s - \int_s^t Y_r|Y_r|^{q-1}dr - \int_s^t Z_r dW_r, 0 < s < t < T,\\
Y_T &= \xi, \notag
\end{align}
where $W$ is a single dimensional Brownian motion, 
$\xi = \infty \cdot {\bm 1}_{\{\tau_0 \le T\}}$
or
$\xi = \infty \cdot {\bm 1}_{\{\tau_0 > T\}}$, and $\tau_0$ is the first exit time of $W$ from an interval
$[a,b].$
The goal of the present work is to generalize these results in the following directions: 
\begin{enumerate}
\item Work with a more general filtration supporting a $d$-dimensional Brownian motion and a Poisson random measure,
\item More general driver processes $f$ that is allowed to be an 
${\mathbb F}$-adapted process,
\item For $\xi_1 = \infty \cdot {\bm 1}_{\{\tau \le T\}}$ we allow $\tau$ to be any stopping time whose distribution around
$T$ has a bounded density; we show that the exit time of a multidimensional continuous diffusion process from a time varying
domain satisfies the density condition.
\item Extend $\xi_2 = \infty \cdot {\bm 1}_{\{\tau > T\}}$ to the 
more general terminal condition
$\xi_2 = \infty \cdot {\bm 1}_{A_T} $ where $A_t$, $t \in [0,T]$ is a decreasing sequence of events adapted to ${\mathcal F}_T$ that is left continuous
in probability at $T$.
\end{enumerate}

Let $(\Omega,\cF,\bP,\bF = (\cF_t)_{t\geq 0})$ be a filtered probability space. 
The filtration $\bF$ is assumed to be complete, right continuous,
it supports a $d$ dimensional Brownian motion $W$ and 
a Poisson random measure $\pi$ with intensity $\mu(de)dt$ on the space $\cE \subset \bR^m \setminus \{0\} $. 
The measure $\mu$ is $\sigma$-finite on $\cE$ and satisfies
$$\int_\cE (1\wedge |e|^2) \mu(de) <+\infty.$$
The compensated Poisson random measure $\tpi(de,dt) = \pi(de,dt) - \mu(de) dt$ is a martingale with respect to the filtration $\bF$. 
In this framework we will study the following generalization
of \eqref{e:earlierBSDE}:
\begin{align} 
Y_t  &= Y_s +\int_t^s f(r,Y_r,Z_r,\psi_r) dr - \int_t^s Z_rdW_r -\int_t^s  \int_\cE \psi_r(e) \tpi(de,dr) -\int_t^s d M_r,
\label{eq:bsde}\\
Y_T &= \xi,\label{e:termcond}
\end{align} 
$0 \le t < s < T.$
We call
$(Y,Z,\psi,M)$ a solution to the BSDE (\ref{eq:bsde},\ref{e:termcond}) if
$(Y,Z,\psi,M)$ satisfies (\ref{eq:bsde},\ref{e:termcond})
and $Y$ is continuous at $T$, i.e.,
\[
\lim_{t\rightarrow T} Y_t = Y_T = \xi;
\]

The driver $f$, 
generalizing the deterministic $-y|y|^{q-1}$ appearing
in \eqref{e:earlierBSDE}, is
defined on $\Omega \times [0,T] \times \bR \times \bR^{k} \times (\bL^1_\mu+\bL^2_\mu)$\footnote{For the precise definition of the sum of two Banach spaces, see \cite{krei:petu:seme:82} or the introduction of \cite{krus:popi:17}.}
and for any fixed $y$, $z$, $\psi$, $f(t,y,z,\psi)$ is assumed to be a progressively measurable process;
thanks to apriori bounds and comparison results proved in
\cite{krus:popi:14,krus:popi:15,krus:popi:17}, we are able to work with a very general class
of drivers; to be able to use their bounds and comparison results we will adopt the assumptions these works
make on the filtration
and on the driver, which  are listed in subsection \ref{ss:knownresults} below.

In Section \ref{s:firstcase} we solve the BSDE (\ref{eq:bsde}, \ref{e:termcond})
with\footnote{We define $0\cdot \infty := 0$.}
\[
\xi = \xi_1 = \infty \cdot {\bm 1}_{\{\tau \le T\}},
\]
where $\tau$ is any stopping time whose distribution in a neighborhood of $T$ has 
a bounded density.
In Section \ref{s:secondcase} we treat terminal conditions of the form
\[
\xi = \xi_2 = \infty \cdot {\bm 1}_{A_T},
\]
where $A_t$ is a decreasing left continuous sequence of events
adapted to our filtration; a special case is $A_T = \{\tau > T \}$ where $\tau$ is a stopping
time with ${\mathbb P}(\tau =T) =0.$

We know from \cite{krus:popi:15} that the BSDE \eqref{eq:bsde} has a minimal supersolution $Y_t^{\min}$
with terminal condition $\xi_1.$ The goal of Section \ref{s:firstcase} is to prove that $Y_t^{\min}$ is continuous at
$T$ and has $\xi_1$ as its limit- this implies that the supersolution is indeed a solution.
Let $Y^\infty$ be the solution of \eqref{eq:bsde} with terminal condition $\xi=\infty$ identically.
The main idea in establishing the continuity of the minimal supersolution is to use the process
$t\mapsto {\mathbb E}[Y^\infty_{\tau} {\bm 1}_{\tau \le T} | {\mathcal F}_{\{t\wedge \tau\}}]$ as an upperbound.
The proof that the upperbound process is well defined
involves two ingredients 1) the fact that $\tau$ has a density and 2) apriori upperbounds on $Y^\infty$
derived in \cite{krus:popi:15}. Although the approach of \cite{krus:popi:seze:18} is different from the
one outlined above, it uses these ingredients as well, both of which are elementary 
in the setup treated in \cite{krus:popi:seze:18}: there is an explicit formula for the density
of the exit time $\tau_0$ and the process $t\mapsto y_t$ in \cite{krus:popi:seze:18}
corresponding to $Y^\infty$ is deterministic with an elementary formula so no apriori bounds were needed in \cite{krus:popi:seze:18}.

The treatment of $\xi_2$, given in Section \ref{s:secondcase}
is a generalization of the argument given in \cite{krus:popi:seze:18}
dealing with $\infty \cdot 1_{\{\tau_0 > T \}}$ where $\tau_0$ is the first time
a one dimensional Brownian motion leaves a bounded interval; the argument in \cite{krus:popi:seze:18}
was based on a reduction to PDE whereas in the present work we will be working directly
with the BSDE. 
The idea is as follows:
we construct two sequences of processes, one increasing and one decreasing
such that the decreasing sequence dominates the increasing one.
The limit of the increasing sequence is our candidate solution (in fact it is exactly the minimal supersolution
of \cite{krus:popi:15} with terminal condition $\xi_2$);
the decreasing sequence is used to prove that the candidate solution
indeed satisfies the terminal condition.
The increasing sequence is the solution of \eqref{eq:bsde} with
terminal condition $Y_T = n \cdot {\bm 1}_{A_T}$;
for the decreasing sequence we solve the same BSDE over the
time interval $[0,T-1/n]$ with terminal condition 
$Y^\infty_{T-1/n} \cdot {\bm 1}_{A_{T-1/n}}.$ That all these
sequences are in the right order will be proved by the comparison
principle for the BSDE \eqref{eq:bsde} derived in \cite{krus:popi:15}.

In Section \ref{s:density} we identify a class of stopping times satisfying the assumptions
made on the stopping times above. The class of these stopping times is defined in terms
of   a diffusion process $X$ driven
by the Brownian motion $W$:
\begin{equation}\label{def:X}
X_t = x_0 + \int_0^t b(s,X_s)ds + \int_0^t \sigma(s,X_s)dW_s,
\end{equation}
where $a=\sigma\sigma'$ is assumed to be uniformly and strictly elliptic and $a$ and $b$ assumed
uniformly Holder continuous; these assumptions are adopted from \cite[page 8]{friedman}.
The initial value $x_0$ takes values in a bounded open set
$D_0$. Define
\[
D=  \cup_{t=0}^T  \{t\} \times D_t \subset {\mathbb R}^{d+1};
\]
$D$ satisfies the assumptions
in \cite{friedman}, see Section \ref{s:density} below.
The class of stopping times identified in this section are exit times of $X$ from the domain $D$:
\begin{equation}\label{d:tau}
\tau \doteq \inf\{t\ge 0: X_t \in D_t^c \}.
\end{equation}
To prove that $\tau$ satisfies the assumptions of Sections \ref{s:firstcase} and \ref{s:secondcase}
it suffices to show that it has a continuous density. Despite the considerable literature on exit
times of diffusions we are not aware of a result in the currently available literature establishing that the exit time $\tau$
of \eqref{d:tau} has a density.
Section \ref{s:density} is devoted to the
derivation of this density; the natural tool for this
is the Green's function of the generator of $X$ derived in
\cite{friedman}.

BSDE of the form \eqref{eq:bsde} with singular terminal conditions correspond to stochastic optimal control problems
with constraints, see \cite{krus:popi:15} and  \cite[Section 4]{krus:popi:seze:18}.
The question of whether a supersolution is really
a solution (i.e., its continuity at $T$) is natural in these optimal control applications. For example,
in the context of optimal liquidation of portfolios, it means that the optimal portfolio 
does not super hedge the penalty cost $\xi$. In \cite{bank:voss:18}, 
a positive answer to this question is a condition for solving the optimal targeting problem.
The terminal conditions $\xi_1$ and $\xi_2$ are also natural from the point of view of optimal
liquidation applications; they correspond to putting conditions on when full liquidation takes place.
For a discussion of this potential application we refer the reader to \cite{krus:popi:seze:18}.

The rest of this introduction lists the assumptions we adopt and the results we will be using from prior works;
it also gives a summary of what is known on the solution of BSDE with singular terminal conditions.
In Section \ref{s:conclusion} we comment on possible future work.
\subsection{Assumptions and results from prior works}\label{ss:knownresults}

Let us first define $\bL^p_\mu=\bL^p(\cE,\mu;\bR)$, the set of measurable functions $\psi : \cE \to \bR$ such that
$$\| \psi \|^p_{\bL^p_\mu} = \int_{\cE} |\psi(e)|^p \mu(de)  < +\infty, \quad
\mbox{and} \quad 
\mathfrak{B}^2_\mu =\begin{cases} \bL^2_\mu & \mbox{if } p \geq 2, \\ \bL^1_\mu+\bL^2_\mu & \mbox{if } p < 2. \end{cases}$$
For the definition of the sum of two Banach spaces, see for example \cite{krei:petu:seme:82}. The introduction of $\mathfrak{B}^2_\mu$ is motivated in \cite{krus:popi:17}. We assume that $f : \Omega \times [0,T] \times \bR \times \bR^{m} \times \mathfrak{B}^2_\mu \to \bR$ is a random measurable function, such that for any $(y,z,\psi)\in  \bR \times \bR^{m} \times \mathfrak{B}^2_\mu$, the process $f(t,y,z,\psi)$ is progressively measurable. For notational convenience we write $f^0_t=f(t,0,0,0)$.

The precise assumptions on the driver $f$, adapted from \cite{krus:popi:15} are as follows:
\begin{enumerate}[label=\textbf{(A\arabic*)}]
\item\label{A1} 
The function $y\mapsto f(t,y,z,\psi)$ is continuous and monotone: there exists $\chi \in \bR$ such that a.s. and for any $t \in [0,T]$ and $z \in \bR^m$ and $\psi \in \mathfrak{B}^2_\mu $
\begin{equation*}
(f(t,y,z,\psi)-f(t,y',z,\psi))(y-y') \leq \chi (y-y')^2.
\end{equation*}
\item \label{A2} 
$\sup_{|y|\leq n} |f(t,y,0,0)-f^0_t| \in L^1((0,T)\times \Omega)$
holds for every $n> 0$.
\item \label{A3}  There exists a progressively measurable process $\kappa = \kappa^{y,z,\psi,\phi} : \Omega \times \bR_+\times \bR^m \times \mathfrak{B}^2_\mu \to \bR$ such that
\begin{equation*}
f(t,y,z,\psi)-f(t,y,z,\phi) \leq \int_\cE (\psi(e)-\phi(e))  \kappa^{y,z,\psi,\phi}_t(e)  \mu(de)
\end{equation*}
with $\bP\otimes Leb \otimes \mu$-a.e. for any $(y,z,\psi,\phi)$, $-1 \leq \kappa^{y,z,\psi,\phi}_t(e)$
and $|\kappa^{y,\psi,\phi}_t(e)| \leq \vartheta(e)$ where $\vartheta$ belongs to the dual space of $\mathfrak{B}^2_\mu$, that is $\bL^2_\mu$ or $\bL^\infty_\mu \cap \bL^2_\mu$. 
\item \label{A4}  There exists a constant $L$ such that
\begin{equation*}
|f(t,y,z,\psi)-f(t,y,z',\psi)| \leq L (z-z')
\end{equation*} for any $(t,y,z,z',\psi)$.
\end{enumerate}
The set of conditions {\bf (A)} guarantees the existence and uniqueness of the solution of the BSDE \eqref{eq:bsde} and \eqref{e:termcond} if 
$$\bE \left[ |\xi|^p + \left( \int_0^T |f^0_t| dt \right)^p \right] <+\infty.$$
(see \cite{krus:popi:17,krus:popi:14} and the references therein). 

A key tool for BSDEs is the comparison principle which ensures that if $\xi^1 \leq \xi^2$ a.s., if we can compare the generators $f^1\leq f^2$ along one solution and if the drivers satisfy the conditions {\bf (A)}, then the solutions can be compared: a.s. $Y^1 \leq Y^2$. See among others \cite[Section 3.2]{delo:13}, \cite[Proposition 4]{krus:popi:14} or \cite[Section 5.3.6]{pard:rasc:14}. 

A second set of assumptions are needed
to control the growth of the process $Y$ when the terminal condition
can take the value $+\infty$; these assumptions generalize the
superlinearity of $y\mapsto y|y|^{q-1}$ in \eqref{e:earlierBSDE} and
are adapted from \cite{krus:popi:15}:
\begin{enumerate}[label=\textbf{(C\arabic*)}]
\item\label{C1} 
There exists a constant $q > 1$ and a positive process $\eta$ such that for any $y \geq 0$
\begin{equation*}
f(t,y,z,\psi)\leq -\frac{y}{\eta_t}|y|^{q-1} + f(t,0,z,\psi).
\end{equation*}

\item\label{C2} 
There exists some $\ell > 1$ such that
\begin{equation*}%\label{eq:alpha_gamma} %\tag{A5}
\bE \int_0^T \left[ \left( \eta_s\right)^{\ell(p-1)}\right]  ds < +\infty
\end{equation*}
where $p$ is the H\"older conjugate of $q$.

\item\label{C3} The parameter $\vartheta$ of \ref{A3} satisfies: for any $\varpi > 2$
\begin{equation*}%\label{eq:f_growth_psi} 
\int_\cE |\vartheta(e)|^{\varpi} \mu(de) < +\infty.
\end{equation*}

\item\label{C4} 
On $f^0$, we suppose that a.s. for any $t\in [0,T]$
\begin{equation*}%\label{as:positivef0}
f_t^0 \ge 0,\qquad \bE \int_0^T \left( f^0_s\right)^\ell ds <+\infty.
\end{equation*}
\end{enumerate}

We further suppose that the generator $(t,y) \mapsto -y|y|^{q-1}/\eta_t$ satisfies 
the {\bf (A)} assumptions, 
which means that $\eta$ satisfies:
\begin{equation} \label{eq:cond_1_over_eta}
\bE \int_0^T \frac{1}{\eta_t} dt < +\infty.
\end{equation}
\begin{remark}[On Assumption \ref{C3}]
In fact it is sufficient to assume that $\vartheta$ belongs to some $\bL^{\rho}_\mu$ for $\rho$ large enough. But it yields to some cumbersome conditions on $\ell$ and $q$ in Theorem \ref{thm:main_result_first_case}. 
\end{remark}

\begin{remark}[On Condition \ref{C4}]
 The work \cite{krus:popi:15} introduces an integrability assumption on $(f_t^0)^- = \max(-f_t^0,0)$ and on $(f^0_t)^+$ (see conditions A4 and A6 in \cite{krus:popi:15}). Hence {\rm \ref{C4}} is stronger. The sign hypothesis could be easily ignore but it would lead to extra technical considerations, which make the presentation of the results heavy. 
 
In \cite[Section 3.1]{popi:16}, it also proved that without the integrability condition {\rm \ref{C4}} on $f^0$, the minimal solution $\Ymin$ can explode a.s. at time $T$ (more precisely, if the non negative process $f^0$ is not in $L^1([0,T]\times \Omega)$). In other words the integrability condition A.6 on $f^0$ in \cite{krus:popi:15} is sufficient to obtain a minimal supersolution but cannot avoid an a.s. explosion at time $T$. 
\end{remark}

From \cite[Theorem 1]{krus:popi:15}, under the setting of conditions {\bf (A)} and {\bf (C)}, and if {\it the filtration is left-continuous at time $T$}, we know that there exists a process $(Y,Z,\psi,M)$ which is a minimal supersolution to the BSDE \eqref{eq:bsde} with singular terminal condition $Y_T = \xi \geq 0$ in the sense that:
\begin{enumerate}
\item for all $t<T$:
$$\bE \left( \sup_{s\in [0,t]} |Y_{s}|^\ell +\left(  \int_0^{t}  |Z_s|^2 ds \right)^{\ell/2}  + \left(  \int_0^{t} \int_\cE |\psi_s(e)|^2 \pi(de,ds)\right)^{\ell/2} + [M ]^{\ell/2}_{t} \right) < +\infty;$$
\item $Y$ is non negative;
\item for all $0\leq s \leq t<T$:
\begin{eqnarray*}
Y_{s}  =  Y_{t } + \int_{s }^{t}  f(u,Y_u,Z_u,\psi_u)du -  \int_{s }^{t}  Z_s dW_s -  \int_{s }^{t} \int_\cE \psi_u(e) \tpi(de,du) - \int_{s}^{t } dM_u.
\end{eqnarray*}
\item The terminal condition \eqref{e:termcond} becomes: a.s. 
\begin{equation} \label{eq:term_cond_super_sol}
 \ds \liminf_{t \to T} Y_{s } \geq \xi.
 \end{equation}
\item For any other supersolution $(Y',Z',\psi',M')$ satisfying the first four properties, we have $Y_t\le Y'_t$ a.s.\ for any $t\in [0,T)$.
\end{enumerate}

As in \cite{krus:popi:seze:18}, we denote this minimal supersolution by $(\Ymin,\Zmin,\Pmin,\Mmin)$. Let us recall that the construction is done by approximation. We consider $(Y^{(k)},Z^{(k)},\psi^{(k)},M^{(k)})$ the unique solution of the BSDE \eqref{eq:bsde} and \eqref{e:termcond} with truncated parameters, namely the terminal condition $\xi \wedge k$ and the driver 
$$f^k(t,y,z,\psi) = \left[ f(t,y,z,\psi)-f^0_t \right] + (f^0_t \wedge k).$$ 
From the comparison principle, the sequence $Y^{(k)}$ is non decreasing and converges to $\Ymin$: a.s. for any $t\in [0,T]$
$$\lim_{k \to +\infty} Y^{(k)}_t = \Ymin_t.$$
The sequence $(Z^{(k)},\psi^{(k)},M^{(k)})$ converges to $(\Zmin,\Pmin,\Mmin)$: for any $0\leq t < T$
\begin{eqnarray*}
&&\lim_{k \to +\infty} \bE \left[  \left(  \int_0^t  | Z^{(k)}_u - \Zmin_u|^2 du \right)^{\ell/2} + \left(  \int_0^t  \int_\cE | \psi^{(k)}_u(e) - \Pmin_u(e) |^2\pi(de,du) \right)^{\ell/2} \right. \\
&&\qquad \qquad + \left( [M^{(k)}-\Mmin]_t \right)^{\ell/2} \bigg] =0.
\end{eqnarray*}

Finally following the arguments of the work \cite{krus:popi:15}, we can prove the following a priori upper estimate on the supersolution: for any $1 < \ell' \leq \ell$, 
\begin{equation}\label{eq:a_priori_estimate_Y_L}
\Ymin_t \leq \frac{K_{\vartheta,L,\ell'}}{(T-t)^{p-\frac{\ell-\ell'}{\ell \ell'}}} \left[ \bE \left( \ \int_t^{T} \left( \left( (p-1)\eta_s\right)^{p-1} + (T-s)^p (f^0_s)^+ \right)^{\ell} ds \bigg| \cF_t\right) \right]^{1/\ell}
\end{equation}
where $K_{\vartheta,L,\ell'}$ is a constant depending only on $\vartheta$, $L$ and $\ell'$. This estimate is valid for any terminal value $\xi$. The proof of this estimate is postponed in the Appendix. 

\subsubsection*{When is a supersolution a solution- continuity at $T$}
The questions that the present work focuses on related to the results summarized above, in the context
of the BSDE \eqref{eq:bsde} and the terminal conditions $\xi_1$ and $\xi_2$ are
\begin{enumerate}
\item Does the limit $\lim_{t\rightarrow T} Y_t^{\min}$ exist?
\item Can the inequality \eqref{eq:term_cond_super_sol} be an equality (if the filtration is left-continuous at time $T$), i.e., is the supersolution $Y_t^{\min}$ in fact a solution?
\end{enumerate}
Let us summarize the known results about these questions in the currently available literature.
The existence of a limit at time $T$ is proved under a structural condition on the generator $f$ (\cite[Theorem 3.1]{popi:16}). Roughly speaking it is proved that $Y$ is a non linear continuous transform of a non negative supermartingale. 

The second question is addressed in \cite{popi:06, popi:16, krus:popi:seze:18, maru:popi:19}. 
In the first two papers \cite{popi:06, popi:16}, the terminal condition $\xi$ is supposed to be Markovian\footnote{No additional assumption is supposed on $f$, that is the setting is only half-Markovian.}, that is $\xi = g(X_T)$, where $X$ is given by \eqref{def:X}\footnote{A jump component driven by the Poisson random measure could be added in the case.}. In \cite{maru:popi:19}, $\xi$ is given by a smooth functional (in the sense of Dupire \cite{dupi:09,cont:four:10,cont:16}) on the paths of $X$. In these three papers, the proof is based on the It\^o formula and on a suitable control on $Z$ and $\psi$, which yields to a condition on $q$ in \ref{C1}, namely $q$ is essentially supposed to be greater than $3$. 

The work \cite{krus:popi:seze:18} was a first attempt to obtain a positive answer to these questions in a non Markovian setting on $\xi$. 
This work 
obtains the continuity of $Y$ at time $T$ with $q>2$ in the first case $\xi_1$ and with $q>1$ in the second case $\xi_2$, which relaxes the assumption on $q$ imposed in \cite{popi:06, popi:16, maru:popi:19}. 
The aim of this paper is to extend this work in the directions indicated
above.

\section{Terminal condition $\xi_1$}\label{s:firstcase}
The goal of this section is to solve the BSDE \eqref{eq:bsde} with terminal
condition $\xi_1 = \infty \cdot {\bm 1}_{\{ \tau \le T \}}$ where $\tau$
is any stopping time whose distribution in a neighborhood of $T$ has a bounded density.
We will see in Section \ref{s:density} below that
first exit times from time varying domains of multidimensional diffusions driven by $W$ satisfy
this condition.
Another simple example is provided by jump times of compound Poisson processes, which are Erlang distributed and they
evidently have densities.

Let $Y^{(k)}$ be the solution of the BSDE \eqref{eq:bsde} with terminal condition
\[
Y^{(k)}_T = \xi \wedge k=  k \cdot {\bm 1}_{\{\tau \le T\}}.
\]
The minimal supersolution of \eqref{eq:bsde}, by definition, is
\[
Y^{\min}_t = \lim_{k\rightarrow \infty} Y^{(k)}_t.
\]
We will construct our solution by showing that $Y^{\min}$ is in fact a solution, i.e., it satisfies 
\begin{equation}\label{e:contcond}
\lim_{t\rightarrow T} Y^{\min}_t = \xi_1.
\end{equation}
The results in \cite{krus:popi:15} imply that \eqref{e:contcond} holds
for $\xi_1 = \infty.$ Therefore, it suffices to show \eqref{e:contcond}
over the event $\{\tau > T \}$ where the right side of \eqref{e:contcond}
is $0$.
We will do so by constructing a positive upperbound process 
$Y^{\infty,u}$ on $Y^{\min}$ 
that converges to $0$ over the same event.
Recall that we suppose that the set of conditions {\bf (A)} and {\bf (C)} hold. Let $Y^\infty$ be the minimal supersolution of \eqref{eq:bsde} with terminal condition $Y_T = \infty$ (if $f(y)=-y|y|^{q-1}$, then 
$Y^\infty_t = ((q-1)(T-t))^{-\frac{1}{q-1}}$).
Define
\[
\xi^{(\tau)}_1 \doteq 
{\bm 1}_{\{\tau < T\}} Y^{\infty}_{\tau}.
\]
The upperbound process $Y^{\infty,u}$ is defined as the solution of the 
BSDE with the terminal value $\xi^{(1)}_\tau=Y^{\infty}_\tau \mathbf 1_{\tau \leq T}$ at the random time $\tau\wedge T$ and the (linear in $y$) generator 
\[
g(t,y,z,\psi) = \chi y +  f(t,0,z,\psi).
\]
For this to be well defined we need the following lemma:
\begin{lemma} \label{lmm:integrability_upper_bound}
If the distribution of $\tau$ in a neighborhood of $T$ has a bounded density
 and if $\ell > 2$ and $q > 2 - \dfrac{2}{\ell-2}$, then there exists some $\varrho>1$
\[
{\mathbb E}_{(x,t)}[ (\xi^{(\tau)}_1)^\varrho ] < \infty.
\]
\end{lemma}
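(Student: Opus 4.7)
The idea is to combine the a priori estimate \eqref{eq:a_priori_estimate_Y_L} on $Y^\infty$ at the random time $\tau$ with the hypothesis that $\tau$ has a bounded density near $T$, through a Hölder inequality that balances these two sources of integrability.

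Evaluating \eqref{eq:a_priori_estimate_Y_L} at $t=\tau$ on $\{\tau<T\}$ gives, for every $\ell'\in(1,\ell]$,
\[
Y^\infty_\tau \;\leq\; \frac{K_{\vartheta,L,\ell'}}{(T-\tau)^{\alpha}}\;R_\tau^{1/\ell},\qquad \alpha\doteq p-\frac{\ell-\ell'}{\ell\ell'}=p-\frac{1}{\ell'}+\frac{1}{\ell},
\]
where $R_\tau=\bE\bigl[\int_\tau^T H_s^\ell\,ds\,\big|\,\cF_\tau\bigr]$ and $H_s=((p-1)\eta_s)^{p-1}+(T-s)^p(f^0_s)^+$. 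Assumptions \ref{C2} and \ref{C4} give $\bE\int_0^T H_s^\ell\,ds<\infty$, hence $\bE[R_\tau]<\infty$ by Fubini.

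Raise the bound to the power $\varrho>1$, multiply by $\mathbf{1}_{\{\tau<T\}}$, and apply Hölder's inequality with conjugate exponents $a=\ell/(\ell-\varrho)$, $b=\ell/\varrho$ (so that $b\varrho/\ell=1$):
\[
\bE\bigl[(\xi^{(\tau)}_1)^\varrho\bigr]\;\leq\;K_{\vartheta,L,\ell'}^{\,\varrho}\left(\bE\left[\frac{\mathbf{1}_{\{\tau<T\}}}{(T-\tau)^{a\alpha\varrho}}\right]\right)^{\!1/a}\bigl(\bE[R_\tau]\bigr)^{1/b}.
\]
The second factor was just bounded. For the first, boundedness of the density of $\tau$ by some $C$ on an interval $[T-\epsilon,T]$ yields $\bE\bigl[\mathbf{1}_{\{\tau<T\}}(T-\tau)^{-a\alpha\varrho}\bigr]\leq \epsilon^{-a\alpha\varrho}+C\int_0^\epsilon u^{-a\alpha\varrho}\,du$, which is finite as soon as $a\alpha\varrho<1$.

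It then remains to exhibit $\varrho>1$ with $a\alpha\varrho<1$. Substituting $a=\ell/(\ell-\varrho)$, the inequality becomes $\varrho(\ell\alpha+1)<\ell$, and this admits a solution $\varrho>1$ iff $\alpha<1-1/\ell$. Minimizing $\alpha$ by sending $\ell'\downarrow 1$ drives $\alpha$ down to $p-1+1/\ell$, so the constraint reduces to $p<2-2/\ell$, which via $p=q/(q-1)$ translates into the quantitative lower bound on $q$ in the hypothesis. The main obstacle is precisely this joint tuning of the auxiliary parameter $\ell'$ inside \eqref{eq:a_priori_estimate_Y_L} and of the Hölder split $(a,b)$: $\ell'$ controls the strength of the time singularity $(T-\tau)^{-\alpha}$, while $(a,b)$ converts the integrability of $R_\tau$ supplied by \ref{C2}--\ref{C4} into a usable factor; the stated condition on $q$ is exactly what leaves a gap for $\varrho$ strictly above $1$.
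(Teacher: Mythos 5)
Your argument is essentially the paper's own proof: evaluate the a priori bound \eqref{eq:a_priori_estimate_Y_L} at the random time $\tau$, apply H\"older's inequality with exponents $\ell/(\ell-\varrho)$ and $\ell/\varrho$, use the bounded density of $\tau$ near $T$ to control the singular factor $(T-\tau)^{-\kappa}$ with $\kappa<1$, and tune $\ell',\varrho\downarrow 1$ to arrive at the constraint $p<2-2/\ell$. One caveat: $p<2-2/\ell$ translates (via $p=q/(q-1)$) into $q>2+\frac{2}{\ell-2}$, which is what the paper's proof actually derives, so your closing claim that this matches the stated hypothesis $q>2-\frac{2}{\ell-2}$ inherits what appears to be a sign typo in the lemma statement rather than a flaw in your argument.
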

\begin{proof}
The assumptions \ref{C2} and \ref{C4} imply that
\[
M_t \doteq
{\mathbb E}\left[
\int_0^{T} \left( \left( (p-1)\eta_s\right)^{p-1} + (T-s)^p (f^0_s)^+ \right)^{\ell} ds \bigg| \cF_t \right]
\]
is a well defined nonnegative martingale. The hypotheses $\eta_t >0$ and $f^0_t \ge 0$ imply
\[
\int_0^{T} \left( \left( (p-1)\eta_s\right)^{p-1} + (T-s)^p (f^0_s)^+ \right)^{\ell} ds \ge 
\int_t^{T} \left( \left( (p-1)\eta_s\right)^{p-1} + (T-s)^p (f^0_s)^+ \right)^{\ell} ds.
\]
This and the aprori bound \eqref{eq:a_priori_estimate_Y_L} on $Y^\infty$ 
imply for any $1 < \varrho < \ell$
\begin{align*}
{\mathbb E}_{(x,t)}[ {\bm 1}_{\{\tau < T\}} (Y^{\infty}_{\tau})^\varrho] 
&\le
{\mathbb E}_{(x,t)}\left[ {\bm 1}_{\{\tau < T\}}
\frac{K^\varrho_{\vartheta,L}}{(T\wedge \tau -t)^{\hat p\varrho}} M_{\tau \wedge T}^{\frac{\varrho}{\ell}} \right] \\
& \leq K^\varrho_{\vartheta,L}{\mathbb E}_{(x,t)}\left[ {\bm 1}_{\{\tau < T\}}
\frac{1}{(T\wedge \tau -t)^{\kappa}} \right]^{\frac{\ell - \varrho}{\ell}}
{\mathbb E}_{(x,t)}\bigg[ M_{\tau \wedge T} \bigg]^{\frac{\varrho}{\ell}}
\end{align*}
where 
$$\kappa = \dfrac{\hat p \varrho \ell}{\ell-\varrho}, \quad \hat p = p -\frac{\ell-\ell'}{\ell \ell'},$$ 
and where we used the H\"older inequality since $\varrho < \ell$. 

$\frac{1}{(T\wedge \tau -t)^{\kappa}}$ is bounded away from $T$; therefore, to show 
${\mathbb E}_{(x,t)}\left[ {\bm 1}_{\{\tau < T\}} \frac{1}{(T\wedge \tau -t)^{\kappa}} \right] < \infty$,
it suffices to show
${\mathbb E}_{(x,t)}\left[ {\bm 1}_{\{T-\delta < \tau < T\}} \frac{1}{(T\wedge \tau -t)^{\kappa}} \right] < \infty$
for some $\delta > 0.$ We have assumed that the distribution of $\tau$ in a neighborhood of $T$ has a bounded
density, which we will denote by $f^\tau(t,u).$ Then:
\[
{\mathbb E}_{(x,t)}\left[ {\bm 1}_{\{T-\delta < \tau < T\}} \frac{1}{(T\wedge \tau -t)^{\kappa}} \right] 
 = \int_{T-\delta}^T  \frac{1}{(u -t)^{\kappa}} f^\tau(t,u) du,
\]
for some $\delta > 0.$
The boundedness of $f^\tau$ implies that
we obtain the desired result if $\kappa < 1$, that is if
$$p < \frac{\ell-\ell'}{\ell \ell'} + \dfrac{\ell-\varrho}{ \varrho \ell}.$$
The right side is maximal for $\ell'=\varrho=1$. Recall that $p> 1$. Hence we need that $\ell > 2$ and if $q > 2 + \frac{2}{\ell-2}$, then $p < 2\frac{\ell-1}{\ell}$. We can find $\varrho > 1$ and $\ell'>1$ such that the desired inequality holds. 
\end{proof}

\begin{remark}{\em
In \cite{krus:popi:seze:18}, the coefficients are bounded, that is, we can take $\ell=+\infty$ and we get back the condition $q>2$. 
}
\end{remark}

The driver $g$ satisfies all conditions {\bf (A)}. Moreover the terminal time $\tau \wedge T$ is bounded. Hence we apply \cite[Theorem 3]{krus:popi:14,krus:popi:17} and ensure the existence and the uniqueness of the solution $(Y^{\infty,u},Z^{\infty,u},\psi^{\infty,u},M^{\infty,u})$ such that for any $t \in [0,T]$
\begin{eqnarray*}
&& \bE \left[ |Y^{\infty,u}_{t\wedge \tau}|^\varrho + \int_0^{\tau\wedge T} |Y^{\infty,u}_{s}|^\varrho ds +\left(  \int_0^{\tau\wedge T}  |Z^{\infty,u}_s|^2 ds \right)^{\varrho/2}  \right. \\
&& \qquad \left. + \left(  \int_0^{\tau\wedge T} \int_\cE |\psi^{\infty,u}_s(e)|^2 \pi(de,ds)\right)^{\varrho/2}  + \left[M^{\infty,u} \right]^{\varrho/2}_{\tau\wedge T} \right] < +\infty.
\end{eqnarray*}
Note that if $f^0\equiv 0$ and $f$ does not depend on $z$ and $\psi$, then 
\[
Y^{\infty,u}_t = {\mathbb E}[ e^{\chi (\tau-t)} Y_{\tau}^\infty {\bm 1}_{\{\tau < T\}} | {\mathcal F}_t].
\]
We next prove that $Y^{\infty,u}$ does serve as an upperbound on $Y^{(k)}$:
\begin{lemma}\label{l:Ykupperbound}
$Y^{(k)}$ admits upper bound
\[
Y^{(k)}_t \le Y^{\infty,u}_t
\]
a.s.  on the random interval $[\![0,\tau\wedge T]\!]$ 
\end{lemma}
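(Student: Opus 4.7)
The plan is to interpret $Y^{(k)}$ on $[\![0,\tau\wedge T]\!]$ as the solution of a BSDE with bounded stopping-time horizon $\tau\wedge T$, and then to apply the comparison principle for such BSDEs (as used in \cite{krus:popi:14,krus:popi:17}) against $Y^{\infty,u}$. Three items need to be checked: the two terminal values at $\tau\wedge T$ are correctly ordered, the drivers $f^k$ and $g$ are ordered along one of the two solutions, and the integrability framework of the comparison theorem is met.

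First I would dispatch the terminal ordering $Y^{(k)}_{\tau\wedge T}\le Y^{\infty,u}_{\tau\wedge T}=\xi^{(\tau)}_1$ by splitting on $\{\tau>T\}$ and $\{\tau\le T\}$. On $\{\tau>T\}$ both sides vanish, since $Y^{(k)}_T=k\mathbf{1}_{\{\tau\le T\}}=0$ and $\xi^{(\tau)}_1=Y^\infty_\tau\mathbf{1}_{\{\tau\le T\}}=0$. On $\{\tau\le T\}$ the terminal of $Y^{\infty,u}$ equals $Y^\infty_\tau$, and the needed bound $Y^{(k)}_\tau\le Y^\infty_\tau$ comes from the standard comparison principle applied on $[0,T)$: the $k$-th approximation $Y^{\infty,(k)}$ of $Y^\infty$ (recalled in Subsection \ref{ss:knownresults}) shares the driver $f^k$ with $Y^{(k)}$ but has terminal value $k\ge k\mathbf{1}_{\{\tau\le T\}}$, so $Y^{(k)}_t\le Y^{\infty,(k)}_t\le Y^\infty_t$ for every $t<T$, and evaluation at $\tau<T$ gives the claim.

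Next I would verify the driver inequality $f^k(t,y,z,\psi)\le g(t,y,z,\psi)$ for $y\ge 0$. Since $f^0_t\ge 0$ by \ref{C4}, the truncation gives $f^0_t\wedge k\le f^0_t$, hence $f^k\le f$; and \ref{A1} applied with $y'=0$ yields, for $y\ge 0$,
\[
f(t,y,z,\psi)\le \chi y + f(t,0,z,\psi) = g(t,y,z,\psi),
\]
so $f^k\le g$ on $\{y\ge 0\}$. This one-sided inequality is the one we need because $Y^{(k)}\ge 0$ (its terminal is nonnegative and $f^k(\cdot,0,0,0)=f^0\wedge k\ge 0$, so comparison against the zero solution applies).

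With the terminal ordering and the driver inequality in hand, the comparison theorem for BSDEs with bounded stopping-time horizon delivers $Y^{(k)}_t\le Y^{\infty,u}_t$ a.s.\ on $[\![0,\tau\wedge T]\!]$. The only potentially delicate point is matching integrability classes: $Y^{(k)}$ is bounded by $k$ on $[0,T]$, and $Y^{\infty,u}$ has the $L^\varrho$-integrability supplied by Lemma \ref{lmm:integrability_upper_bound}, so both solutions sit inside the space in which the comparison theorem of \cite{krus:popi:14,krus:popi:17} is stated; this bookkeeping is the main piece of routine work, and I do not expect any genuine obstacle beyond it.
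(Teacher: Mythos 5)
Your proposal is correct and follows essentially the same route as the paper: order the terminal values at $\tau\wedge T$ by comparing $Y^{(k)}$ with the approximating sequence of $Y^\infty$ on $[0,T)$, note that \ref{A1} with $y'=0$ gives $f(t,y,z,\psi)\le \chi y+f(t,0,z,\psi)=g(t,y,z,\psi)$ for $y\ge 0$ (which suffices since $Y^{(k)}\ge 0$), and conclude by the comparison principle on the random horizon $[\![0,\tau\wedge T]\!]$. The only cosmetic differences are that the paper compares against the approximation of $Y^\infty$ with terminal value $n\ge k$ rather than $k$, and it does not dwell on the integrability bookkeeping you mention; neither affects the argument.
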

\begin{proof}
The minimal solution $Y^\infty$ is constructed by approximation and for any $n \geq k$, we have: $k \cdot {\bm 1}_{\{\tau \le T\}} \leq n$ a.s. By the comparison principle for BSDEs, a.s. for any $t\in [0,T]$: $Y^{(k)}_t \leq Y^\infty_t.$ Hence a.s. 
$$Y^{(k)}_{\tau \wedge T} = Y^{(k)}_\tau\mathbf 1_{\tau \leq T} \leq Y^{\infty}_\tau \mathbf 1_{\tau \leq T}.$$
Since $Y^{(k)}$ solves the BSDE \eqref{eq:bsde} on the whole interval $[0,T]$, the stopped process $Y^{(k),\tau} = Y^{(k)}_{\cdot \wedge \tau}$ solves the same BSDE on the random interval $[\![0,\tau \wedge \tau]\!]$. 

Now $Y^{\infty,u}$ is the solution of the BSDE with the terminal value $\xi^{(1)}_\tau=Y^{\infty}_\tau \mathbf 1_{\tau \leq T}$ at the random time $\tau\wedge T$ and the generator 
$$g(t,y,z,\psi) =\chi y + f(t,0,z,\psi) .$$
From the assumptions {\bf (A)} on $f$, for any $y\geq 0$, we have
$$f(t,y,z,\psi) \leq f(t,y,z,\psi) - f(t,0,z,\psi) +  f(t,0,z,\psi) \leq \chi y + f(t,0,z,\psi) =  g(t,y,z,\psi).$$
Note that $Y^{(k)}$ and $Y^\infty$ are non negative. Hence we can compare the drivers and deduce the claimed result by the comparison principle. 
\end{proof}

We now prove that the upperbound process has the continuity property we need at terminal time $T$:
\begin{lemma}\label{l:limitofYuatT}
The upperbound process $Y^{\infty,u}$ satisfies:
\[\lim_{t\to T} Y^{\infty,u}_t = 0.
\]
 a.s. on $\{\tau > T\}$
\end{lemma}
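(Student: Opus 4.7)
The plan is to derive a Feynman--Kac-type representation of $Y^{\infty,u}$ as a conditional expectation involving $\xi^{(1)}_\tau$ and an $f^0$-integral, and then to pass to the limit $t\to T^-$ using Doob's martingale convergence, exploiting the vanishing of $\xi^{(1)}_\tau$ on $\{\tau>T\}$. This generalises the explicit scalar formula displayed just before the statement.

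First I linearise the driver $g(s,y,z,\psi)=\chi y+f(s,0,z,\psi)$ along the solution $(Y^{\infty,u},Z^{\infty,u},\psi^{\infty,u})$. Assumption \ref{A4} yields a progressively measurable $\beta$ with $|\beta_s|\leq L$ such that $f(s,0,Z^{\infty,u}_s,\psi^{\infty,u}_s)-f(s,0,0,\psi^{\infty,u}_s)=\beta_s Z^{\infty,u}_s$; applying \ref{A3} in the standard way used in BSDE comparison proofs gives a progressively measurable $\gamma$ with $-1\leq \gamma_s(e)\leq \vartheta(e)$ such that $f(s,0,0,\psi^{\infty,u}_s)-f^0_s=\int_\cE \gamma_s(e)\psi^{\infty,u}_s(e)\mu(de)$. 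The BSDE for $Y^{\infty,u}$ on $[\![0,\tau\wedge T]\!]$ thus becomes linear in $(y,z,\psi)$, and the classical representation formula for linear BSDEs with jumps yields
\[
Y^{\infty,u}_t=\bE\!\left[\Gamma_{t,\tau\wedge T}\,\xi^{(1)}_\tau+\int_t^{\tau\wedge T}\Gamma_{t,s}\,f^0_s\,ds\,\bigg|\,\cF_t\right],
\]
where $\Gamma_{t,s}=e^{\chi(s-t)}\mathcal D_{0,s}/\mathcal D_{0,t}$ and $\mathcal D_{0,\cdot}$ is the strictly positive Dol\'eans--Dade exponential martingale built from $\beta$ (Brownian part) and $\gamma$ (jump part). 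The boundedness of $\beta$ by $L$ and the moments of $\vartheta$ provided by \ref{C3} ensure that $\mathcal D_{0,\cdot}$ is a genuine martingale with finite moments of every order on $[0,T]$.

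On $\{\tau>T\}$ we have $\tau\wedge T=T$ and $\xi^{(1)}_\tau=Y^\infty_\tau\mathbf 1_{\{\tau\leq T\}}=0$, so on this event the representation reduces to
\[
Y^{\infty,u}_t=\frac{e^{\chi(T-t)}}{\mathcal D_{0,t}}\,N_t+\int_t^T\Gamma_{t,s}\,f^0_s\,ds,\qquad N_t:=\bE\bigl[\mathcal D_{0,T}\xi^{(1)}_\tau\bigm|\cF_t\bigr].
\]
The integral tends to $0$ a.s.\ as $t\to T^-$ by \ref{C4} and H\"older (using the moments of $\Gamma$). The martingale $N$ is uniformly integrable by Lemma \ref{lmm:integrability_upper_bound} (giving an $L^\varrho$ bound on $\xi^{(1)}_\tau$) combined with the moments of $\mathcal D$; by Doob and the left-continuity of $\bF$ at $T$, $N_t$ converges a.s.\ to $N_T=\bE[\mathcal D_{0,T}\xi^{(1)}_\tau\mid\cF_T]$. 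Since $\{\tau>T\}\in\cF_T$ and $\xi^{(1)}_\tau\mathbf 1_{\{\tau>T\}}=0$, one has $\mathbf 1_{\{\tau>T\}}N_T=\bE[\mathbf 1_{\{\tau>T\}}\mathcal D_{0,T}\xi^{(1)}_\tau\mid\cF_T]=0$; combined with $\mathcal D_{0,t}\to \mathcal D_{0,T}>0$ by left-continuity, we conclude $\lim_{t\to T^-}Y^{\infty,u}_t=0$ a.s.\ on $\{\tau>T\}$.

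The main obstacle is the justification of the representation formula: one must verify that $\mathcal D_{0,\cdot}$ is a genuine (not merely local) martingale and that $\mathcal D_{0,T}\xi^{(1)}_\tau$ is integrable. The former follows from Novikov's condition for the Brownian part (using $|\beta|\leq L$) and a standard exponential moment estimate for the jump part (using $-1\leq \gamma\leq \vartheta$ together with the $L^\varpi_\mu$-integrability of $\vartheta$ in \ref{C3}); the latter is H\"older applied to Lemma \ref{lmm:integrability_upper_bound}. A more pedestrian alternative would simply invoke the identity $Y^{\infty,u}_T=0$ on $\{\tau>T\}$, the c\`adl\`ag regularity of $Y^{\infty,u}$, and the absence of martingale jumps at the deterministic time $T$ under the left-continuity of $\bF$; the representation approach, however, makes the required integrability assumptions explicit and follows the blueprint suggested by the paper's scalar formula.
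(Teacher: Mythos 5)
Your strategy is the same as the paper's: linearise the driver $g$ via \ref{A3}--\ref{A4}, bound $Y^{\infty,u}$ by a conditional expectation built from a Dol\'eans--Dade-type exponential, and send $t\to T$ using the integrability of Lemma \ref{lmm:integrability_upper_bound} together with left-continuity of $\bF$ at $T$. However, two steps are wrong as written. First, \ref{A3} is only a one-sided condition, so there is in general no $\gamma$ with $f(s,0,0,\psi^{\infty,u}_s)-f^0_s=\int_\cE\gamma_s(e)\psi^{\infty,u}_s(e)\,\mu(de)$; you only get ``$\leq$'', hence not an exact representation of $Y^{\infty,u}$ but an upper bound $Y^{\infty,u}_t\leq\Gamma_t$ (this is precisely what the paper proves). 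That is enough for the lemma, but then you also need the lower bound $Y^{\infty,u}\geq 0$, which you never state; it follows by comparison since $\xi^{(1)}_\tau\geq 0$ and $g(t,0,0,0)=f^0_t\geq 0$ by \ref{C4}.

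Second, the display ``on this event the representation reduces to'' is not a legitimate manipulation: $\{\tau>T\}\notin\cF_t$ for $t<T$, so inside $\bE\bigl[\Gamma_{t,\tau\wedge T}\,\xi^{(1)}_\tau+\int_t^{\tau\wedge T}\Gamma_{t,s}f^0_s\,ds\,\big|\,\cF_t\bigr]$ you cannot replace the stopping at $\tau\wedge T$ and the factor $e^{\chi(\tau\wedge T-t)}$ by their values at $T$ merely because the event you ultimately restrict to satisfies $\tau>T$; the conditional expectation still averages over outcomes with $\tau\leq T$, where $\xi^{(1)}_\tau\neq 0$. The repair keeps the stopped quantities inside: bound $e^{\chi(\tau\wedge T-t)}\leq e^{|\chi|T}$, observe that the random variable $\Xi$ obtained by evaluating the exponential weight at $\tau\wedge T$ times $\xi^{(1)}_\tau$ is integrable (Lemma \ref{lmm:integrability_upper_bound} plus the exponential moments from $|\beta|\leq L$ and \ref{C3}, via H\"older) and vanishes identically on $\{\tau>T\}$, so the uniformly integrable martingale $\bE[\Xi\mid\cF_t]$ converges a.s., by left-continuity of $\bF$ at $T$, to $\Xi=0$ on $\{\tau>T\}$; the normalising positive factor indexed by $t$ converges a.s.\ to a strictly positive limit (no jump at the fixed time $T$), and the $f^0$-term is handled as you indicate. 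With these two repairs your argument coincides with the paper's proof that $0\leq\lim_{t\to T}Y^{\infty,u}_t\leq\lim_{t\to T}\Gamma_t=0$ on $\{\tau>T\}$.
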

\begin{proof}
Indeed for any $0\leq s\leq t$:
\begin{eqnarray*}
Y^{\infty,u}_{s\wedge \tau\wedge T} & = & Y^{\infty,u}_{t\wedge \tau\wedge T} +\int_{s\wedge \tau\wedge T}^{t\wedge \tau\wedge T}  g(r,Y^{\infty,u}_r, Z^{\infty,u}_r,\psi^{\infty,u}_r) dr \\
& - & \int_{s\wedge \tau\wedge T}^{t\wedge \tau\wedge T} Z^{\infty,u}_rdW_r -\int_{s\wedge \tau\wedge T}^{t\wedge \tau\wedge T}  \int_\cE \psi^{\infty,u}_r(e) \tpi(de,dr) -\int_{s\wedge \tau\wedge T}^{t\wedge \tau\wedge T} d M^{\infty,u}_r.
\end{eqnarray*}
Since $g$ is linear in $y$ and using \ref{A3} and \ref{A4}, we have an explicit upper bound on $Y^{\infty,u}$:
$$Y^{\infty,u}_{t} \leq \bE \left[ \mathcal E_{t, \tau\wedge \tau} Y^{\infty}_\tau \mathbf 1_{\tau \leq T} + \int_t^{\tau\wedge T} \mathcal E_{t,s} f^0_s ds \bigg| \mathcal F_t \right] = \Gamma_t,$$
where for $t\leq s$
$$\mathcal E_{t,s} = \exp\left(\chi(s-t)+ L (W_s-W_t) +\frac{L^2}{2} (s-t) \right)V^{\infty}_{t,s} $$
and $V^\infty$ is the Dol\'eans-Dade exponential:
\begin{equation*}
V^{\infty}_{t,s} = 1+ \int_t^s \int_{\cE}  V^{\infty}_{t,u^-} \kappa^{0,0,\phi^{\infty,u},0}_u(e) \tpi(de,du).
\end{equation*}
From assumptions \ref{C3} and \ref{C4}, together with the integrability property proved in Lemma \ref{lmm:integrability_upper_bound}, we obtain that if $\tau > T$,
$$0\leq \lim_{t\to T} Y^{\infty,u}_{t} \leq \lim_{t\to T} \Gamma_t = 0,$$
which achieves the proof of the lemma.
\end{proof}

Combining the lemmas above we have the main result of this section:
\begin{theorem} \label{thm:main_result_first_case}
Under conditions {\rm {\bf (A)}} and {\rm {\bf (C)}}, if the distribution of the stopping time $\tau$ 
is given by a  bounded density in a neighborhood of $T$, $\ell > 2$ and $q > 2 - \dfrac{2}{\ell-2}$, 
then the minimal supersolution with terminal condition $\xi_1$ satisfies
\begin{equation}\label{e:contatT}
\lim_{t\to T} \Ymin_t  = \xi_1
\end{equation}
almost surely.
\end{theorem}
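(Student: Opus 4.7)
The plan is to combine the three preparatory lemmas by decomposing the sample space into the complementary events $\{\tau \le T\}$ and $\{\tau > T\}$ and handling each piece separately.

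On $\{\tau \le T\}$ the terminal value is $\xi_1 = +\infty$, and the minimal-supersolution property guaranteed by \cite[Theorem 1]{krus:popi:15}, namely $\liminf_{t\to T} Y^{\min}_t \ge \xi_1$ a.s., automatically gives $\liminf_{t\to T} Y^{\min}_t = +\infty$ on this event, which forces $\lim_{t\to T} Y^{\min}_t = +\infty = \xi_1$. No additional input is needed here, so the real issue is reduced to the complementary event.

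On $\{\tau > T\}$ the target value is $0$, and I would establish the limit by a squeeze between $0$ and $Y^{\infty,u}$. The inequality $Y^{(k)}_t \le Y^{\infty,u}_t$ on $[0,\tau\wedge T]$ from Lemma \ref{l:Ykupperbound} holds for every $k$, so letting $k \to \infty$ and invoking the monotone construction $Y^{(k)} \uparrow Y^{\min}$ yields $0 \le Y^{\min}_t \le Y^{\infty,u}_t$ on the same random interval. Letting $t \to T$ along $\{\tau > T\}$ and applying Lemma \ref{l:limitofYuatT}, the upper bound vanishes and the squeeze forces $\lim_{t\to T} Y^{\min}_t = 0 = \xi_1$ on this event.

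Combining the two cases produces \eqref{e:contatT}. The bulk of the technical work has already been absorbed into the preparatory lemmas: the hypotheses $\ell > 2$ and $q > 2 - 2/(\ell-2)$ enter through the H\"older estimate in Lemma \ref{lmm:integrability_upper_bound} used to control $\bE[(\xi^{(\tau)}_1)^\varrho]$, and the convergence $Y^{\infty,u}_t \to 0$ on $\{\tau > T\}$ provided by Lemma \ref{l:limitofYuatT} rests on that integrability via the Dol\'eans--Dade exponential representation together with assumptions \ref{C3} and \ref{C4}. The main obstacle is therefore the integrability of the auxiliary terminal datum $\xi^{(\tau)}_1$; once it is in hand, the proof of the theorem itself is a short bookkeeping step combining the three lemmas.
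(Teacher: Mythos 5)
Your proposal is correct and follows essentially the same route as the paper: on $\{\tau\le T\}$ the conclusion comes directly from the supersolution property $\liminf_{t\to T}\Ymin_t\ge\xi_1$, and on $\{\tau>T\}$ one squeezes $0\le Y^{(k)}_t\le Y^{\infty,u}_t$ (Lemma \ref{l:Ykupperbound}), passes to the limit in $k$, and applies Lemma \ref{l:limitofYuatT}. The only difference is cosmetic: the paper treats the event $\{\tau\le T\}$ implicitly via the remark at the start of the section, while you spell it out.
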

\begin{proof}
As stated in the beginning of this section it suffices to prove \eqref{e:contatT} over the event $\{ \tau > T \}$
where $\xi_1 = 0.$ By our assumptions on the driver $f$ $Y^{(k)}$ is nonnegative; this and Lemma \ref{l:Ykupperbound} gives
\[
0 \le Y^{(k)}_t \le Y^{\infty,u}_t.
\]
On the other hand, by Lemma \ref{l:limitofYuatT}, the limit as $t\rightarrow T$ of the right side is $0$ over the event
$\{\tau > T\}.$ These imply \eqref{e:contatT}.
\end{proof}
This result generalizes the continuity result \cite[Theorem 2.1]{krus:popi:seze:18}. If the setting of this former result was less general, we were able to describe precisely the minimal solution, namely that it is  obtained by pasting two processes at time $\tau$.
In the following paragraphs we discuss the possibility of defining a solution to the BSDE by this pasting method;
the main point is this: the presence of the  orthogonal martingale $M$ complicates this approach; but if the filtration
is assumed to be generated by $W$ and $\pi$ alone then the same technique can be used in the present setting as well.

Let $Y^{1,\tau}$ be the solution of the BSDE \eqref{eq:bsde} in the time interval $[0,\tau \wedge T]$ with terminal condition $\xi_1^{(\tau)}$ (again we can apply \cite[Theorem 3]{krus:popi:14,krus:popi:17} as for $Y^{\infty,u}$). Following the idea of \cite[Theorem 2.1]{krus:popi:seze:18}, let us define
\[
Y_t^{(1)}\doteq \begin{cases}
Y_t^{1,\tau}, & t \le \tau \wedge T \\
Y_t^{\infty} & \tau < t \le T,
\end{cases}
\]
where we assume that $\tau$ is an $\mathbb F^W$ stopping time, that is it just depends on the paths of $W$, and is predictable 
(exit times of Section \ref{s:density} are a particular case). 
The jump times of $Y_t^{1,\tau}$ and of $Y^{\infty}$ coincide with the jump times of the Poisson random measure or of the orthogonal martingale component. A consequence of the Meyer theorem (see \cite[Chapter 3, Theorem 4]{prot:04}) implies that the jump times of $\pi$ are totally inaccessible, hence a.s. cannot be equal to $\tau$. However we cannot exclude that the orthogonal martingale may have a jump at time $\tau$. The second issue is the definition of the martingale part $(Z,\psi,M)$. For the first two components, we can easily paste them together
 \[
Z_t^{(1)}\doteq \begin{cases}
Z_t^{1,\tau}, & t \le \tau \wedge T \\
Z_t^{\infty} & \tau < t \le T.
\end{cases},\qquad \psi_t^{(1)}(e) \doteq \begin{cases}
\psi_t^{1,\tau}(e), & t \le \tau \wedge T \\
\psi_t^{\infty}(e) & \tau < t \le T.
\end{cases}.
\]
Since $\tau$ is predictable, these two processes are also predictable and the stochastic integrals 
$$\int_0^\cdot Z_t^{(1)} dW_t,\quad \int_0^\cdot \int_\cE \psi_t^{(1)}(e) \tpi(de,dt)$$
are well-defined and are local martingales on $[0,T)$. Nonetheless if we define $M^{(1)}$ similarly, we cannot ensure that this process is still a local martingale. For the parts with $Z$ and $\psi$, the local martingale property is due to the representation as a stochastic integral.
Based on these observations we provide the following result on the pasting method under the assumption
that the filtration is generated by $W$ and $\pi$ alone; the approach in the proof of this proposition is the generalization
of the approach used in \cite{krus:popi:seze:18}.
\begin{proposition}
Assume that the filtration is generated by $W$ and $\pi$. Then $Y_t^{(1)}$ solves the BSDE \eqref{eq:bsde} on $[0,T]$
with terminal condition $Y_T = \xi_1$ and satisfies the continuity property at time $T$. Moreover $Y^{(1)} = \Ymin$.
\end{proposition}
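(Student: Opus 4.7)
The plan is to verify that the pasted process $Y^{(1)}$ solves the BSDE \eqref{eq:bsde} on $[0,T)$, that it is continuous at $T$ in the sense $\lim_{t\to T} Y^{(1)}_t = \xi_1$ a.s., and finally that it coincides with $Y^{\min}$. The hypothesis that $\bF$ is generated by $W$ and $\pi$ is crucial: by the martingale representation theorem, every local martingale in this filtration is a stochastic integral against $W$ and $\tpi$, so the orthogonal components $M^{1,\tau}$ and $M^{\infty}$ in the canonical decompositions of $Y^{1,\tau}$ and $Y^{\infty}$ both vanish, and pasting only $Z$ and $\psi$ suffices.

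First I would check the BSDE identity on $[0,T)$: since $\tau$ is a predictable $\bF^W$-stopping time, $Z^{(1)}$ and $\psi^{(1)}$ are predictable, and on every $[0,T-\eps]$ they inherit the integrability of $Z^{1,\tau},\psi^{1,\tau},Z^{\infty},\psi^{\infty}$, so the stochastic integrals $\int_0^\cdot Z^{(1)}_s dW_s$ and $\int_0^\cdot\int_{\cE} \psi^{(1)}_s(e)\tpi(de,ds)$ are well-defined local martingales on $[0,T)$. The identity \eqref{eq:bsde} is inherited on $[0,\tau\wedge T]$ from $Y^{1,\tau}$ and on $(\tau\wedge T,T)$ (non-empty only on $\{\tau<T\}$) from $Y^{\infty}$; concatenation at $\tau\wedge T$ is consistent because $Y^{1,\tau}_{\tau\wedge T}=\xi_1^{(\tau)}=Y^{\infty}_{\tau}\mathbf{1}_{\{\tau\le T\}}$, and the Meyer argument recalled just before the proposition ensures $\pi$ a.s.\ does not jump at the predictable time $\tau$, so no spurious jump is introduced.

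For the continuity at $T$: on $\{\tau\le T\}$, $Y^{(1)}_t=Y^{\infty}_t$ for $t\in(\tau,T)$ and $\lim_{t\to T} Y^{\infty}_t=+\infty=\xi_1$ a.s., because $Y^{\infty}$, being the minimal supersolution for the constant terminal $+\infty$, attains its limit under conditions {\bf (A)} and {\bf (C)}. On $\{\tau>T\}$, $Y^{(1)}_t=Y^{1,\tau}_t$ throughout a left-neighborhood of $T$; since $Y^{1,\tau}$ solves a standard BSDE with bounded terminal time $\tau\wedge T$ and integrable terminal value (Lemma \ref{lmm:integrability_upper_bound}), and $\pi$ a.s.\ does not jump at the deterministic time $T$, we obtain $\lim_{t\uparrow T} Y^{1,\tau}_t=\xi_1^{(\tau)}=0=\xi_1$ on this event.

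To identify $Y^{(1)}=Y^{\min}$, the bound $Y^{\min}\le Y^{(1)}$ is immediate by minimality, since $Y^{(1)}$ is a supersolution with terminal $\xi_1$. For the reverse inequality, on $\{\tau\le T\}$ the restriction of $Y^{\min}$ to $[\tau,T)$ is a supersolution of \eqref{eq:bsde} with constant terminal $+\infty$, so by the minimality of $Y^{\infty}$ for that terminal condition, $Y^{\min}_t\ge Y^{\infty}_t=Y^{(1)}_t$ on $[\tau,T)$; at the random boundary this yields $Y^{\min}_{\tau\wedge T}\ge Y^{1,\tau}_{\tau\wedge T}$, the inequality being trivial on $\{\tau>T\}$. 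Applying the comparison principle on $[\![0,\tau\wedge T]\!]$ with the common generator then gives $Y^{\min}\ge Y^{1,\tau}=Y^{(1)}$ on this interval as well. The main obstacle, and precisely the reason the filtration hypothesis cannot be dropped, is the pasting of the martingale part at $\tau$: in a richer filtration the orthogonal martingale could in principle jump at the predictable time $\tau$, and the naively pasted $M^{(1)}$ would fail to be a local martingale, which would break the whole construction.
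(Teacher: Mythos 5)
Your verification of the pasted dynamics, the continuity at $T$, and the easy inequality $\Ymin\le Y^{(1)}$ follows the paper's line and is fine. The gap is in your argument for the reverse inequality $Y^{(1)}\le \Ymin$. You assert that on $\{\tau\le T\}$ the restriction of $\Ymin$ to $[\tau,T)$ is a supersolution with constant terminal value $+\infty$, and then invoke the minimality of $Y^\infty$ to get $\Ymin_t\ge Y^\infty_t$ there. But the minimality property of $Y^\infty$ (as recalled in subsection \ref{ss:knownresults}) compares $Y^\infty$ only with supersolutions defined on $[0,T)$ over the whole probability space satisfying $\liminf_{t\to T}Y'_t\ge +\infty$ \emph{almost surely}. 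The process $\Ymin$ is not such a supersolution: on $\{\tau>T\}$ its liminf is $0$ (indeed that is what Theorem \ref{thm:main_result_first_case} proves), so the minimality statement does not apply. What you need is a comparison localized to the event $\{\tau\le T\}$ (equivalently to events $\{\tau\le s\}\in\cF_s$) and started from the random time $\tau$; no such localized comparison for singular terminal values is available off the shelf in this framework, and proving it requires a genuine argument (e.g.\ multiplying by indicators as in Section \ref{s:secondcase}, building a globally defined supersolution equal to $\Ymin$ on $\{\tau\le s\}$ and to $Y^\infty$ elsewhere, or a linearization-plus-Fatou argument against each $Y^n$). Likewise, your final step "apply the comparison principle on $[\![0,\tau\wedge T]\!]$" presupposes a terminal value for $\Ymin$ at $\tau\wedge T$; on $\{\tau\ge T\}$ the endpoint is $T$, where $\Ymin$ has no terminal value, only a liminf, so again a limiting/Fatou argument is needed rather than a direct appeal to comparison.

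The paper sidesteps both difficulties: it introduces the pasted processes $Y^{(1),n}$ built from $Y^n$ (terminal value $n$) and $Y^{1,\tau,n}$ (terminal value $Y^n_\tau\mathbf 1_{\tau\le T}$), for which all terminal data are finite, so the standard comparison principle gives $Y^{(1),n}\le Y^{(k)}\le\Ymin$ for $k\ge n$, and then lets $n\to\infty$ to conclude $Y^{(1)}\le\Ymin$. Your route can very likely be repaired along the lines sketched above, but as written the key inequality $\Ymin\ge Y^\infty$ after $\tau$ does not follow from the minimality you cite, so the identification $Y^{(1)}=\Ymin$ is not yet proved.
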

\begin{proof}
Since there is no additional martingale $M$ in the definition of $Y^{1,\tau}$ and $Y^\infty$, the resulting process $Y^{(1)}$ is continuous at time $\tau$. 

Now let us fix $s < t < T$. On the set $\{ \tau \leq s\}$, $Y_r^{(1)} = Y^\infty_r$ for any $r \in [s,t]$. Therefore we have 
$$Y^{(1)}_s  =Y^{(1)}_t +\int_s^t  f(r,Y^{(1)}_r,Z^{\infty}_r,\psi^{\infty}_r) dr - \int_s^t Z^\infty_rdW_r -\int_s^t  \int_\cE \psi^\infty_r(e) \tpi(de,dr).$$
The dynamics of $Y^{1,\tau}$ is given by:
\begin{eqnarray*}
Y^{1,\tau}_{s\wedge \tau\wedge T} & = & Y^{1,\tau}_{t\wedge \tau\wedge T} +\int_{s\wedge \tau\wedge T}^{t\wedge \tau\wedge T}  f(r,Y^{1,\tau}_r,Z^{1,\tau}_r,\psi^{1,\tau}_r) dr \\
& - & \int_{s\wedge \tau\wedge T}^{t\wedge \tau\wedge T} Z^{1,\tau}_rdW_r -\int_{s\wedge \tau\wedge T}^{t\wedge \tau\wedge T}  \int_\cE \psi^{1,\tau}_r(e) \tpi(de,dr) .
\end{eqnarray*}
It implies that for $\{\tau \geq t\}$, $Y^{(1)}$ has the required dynamics. Finally for $\{\tau \in (s,t) \}$, we have
\begin{eqnarray*}
Y^{1,\tau}_{s} & = & Y^{1,\tau}_{ \tau} +\int_{s}^{ \tau}  f(r,Y^{1,\tau}_r,Z^{1,\tau}_r,\psi^{1,\tau}_r) dr \\
& - & \int_{s}^{ \tau} Z^{1,\tau}_rdW_r -\int_{s}^{\tau}  \int_\cE \psi^{1,\tau}_r(e) \tpi(de,dr) 
\end{eqnarray*}
and
$$Y^\infty_\tau  =Y^{\infty}_t +\int_\tau^t  f(r,Y^{\infty}_r,Z^{\infty}_r,\psi^{\infty}_r) dr - \int_\tau^t Z^\infty_rdW_r -\int_\tau^t  \int_\cE \psi^\infty_r(e) \tpi(de,dr).$$
By the continuity of $Y^{(1)}$ at time $\tau$, we get the desired dynamics also in this case.

Finally let us show that $Y^{(1)}$ is continuous at time $T$. On the set $\{\tau < T\}$, we have 
$$\lim_{t\to T} Y^{(1)}_t = \liminf_{t\to T} Y^{(1)}_t = \liminf_{t\to T} Y^\infty_t = +\infty.$$
And on $\{\tau \geq T\}$, 
$$\lim_{t\to T} Y^{(1)}_t = \lim_{t\to T} Y^{(1,\tau)}_{t} =\xi_1^{(\tau)} =0. $$
We can conclude that $Y^{(1)}$ satisfies the BSDE \eqref{eq:bsde} on $[0,T]$ with terminal condition $Y_T = \xi_1$ and is continuous at time $T$.

From the minimality of $\Ymin$, we have immediately that $\Ymin_t \leq Y^{(1)}_t$, a.s. for any $t \in [0,T]$. To obtain the converse inequality, let us define 
\[
Y_t^{(1),n}\doteq \begin{cases}
Y_t^{1,\tau,n}, & t \le \tau \wedge T \\
Y_t^{n} & \tau < t \le T
\end{cases}
\]
where $Y^n$ (resp. $Y^{1,\tau,n}$) is the solution of the BSDE \eqref{eq:bsde} on $[0,T]$ (resp. on $[0,\tau\wedge T]$) with terminal condition $n$ (resp. $Y^n_\tau \mathbf 1_{\tau \leq T}$). Then we have that for any $k\geq n$, $Y^{(1),n} \leq Y^{(k)} \leq \Ymin$. By construction of $Y^\infty$, $Y^{(1),n}$ converges to $Y^{(1)}$. Therefore we conclude that $Y^{(1),\tau} = \Ymin$ and this achieves the proof of the Proposition.
\end{proof}

\section{Terminal condition $\xi_2$}\label{s:secondcase}
%---------------

The goal of this section is to prove the continuity of the minimal
supersolution for the terminal condition 
\[
\xi = \xi_2 = \infty \cdot {\bm 1}_{A_T},
\]
where $A_t$ is a decreasing sequence of events
adapted to our filtration: for any $s \leq t$, $A_t \subset A_s$ and $A_t \in \cF_t$. If $\tau_0$ is a stopping time, the set $A_t= \{\tau_0 > t \}$ provides an example. 
We also assume that:
\begin{enumerate}[label=\textbf{(H\arabic*)}]
\item\label{H1} The sequence is left continuous at time $T$ in probability, i.e.
\begin{equation} \label{eq:cond_left_cont_A_T}
\end{equation}
\item \label{H2} There exists an increasing sequence $(t_n, \ n\in \mathbb N)$ such that $t_n < T$ for any $n$, $\lim_{n\to +\infty} t_n = T$, and such that the filtration $\mathbb F$ is left continuous at time $t_n$ for any $n$. Recall that we already assume left continuity of $\mathbb F$ at time $T$. 
 \end{enumerate}
If $A_t$ is defined as $A_t = \{\tau_0 > t\}$ through a stopping time $\tau_0$, assumption (\ref{H1}) is equivalent to: ${\mathbb P}(\tau_0=T)= 0.$
In particular if $\tau_0$ has a density this condition is satisfied. Therefore, as in the previous section, if $\tau_0$ is the jump time
of an ${\mathbb F}$-adapted compound Poisson process, then it generates a sequence $A_t$ satisfying (\ref{H1}). The same comment
applies to the exit times whose densities are derived in the next section.

\begin{Rem}[On Condition \ref{H2}]
If the filtration $\mathbb F$ is quasi left-continuous, then {\rm \ref{H2}} holds for any sequence $t_n$. In particular our hypothesis is valid if $\mathbb F$ is generated by $W$ and $\pi$. 

The notion of jumps for a filtration has been studied in \cite{jaco:skor:94} (see also \cite[Section 2]{prot:15}). Let us note that we are not able to construct a counter example, that is a filtration such that {\rm \ref{H2}} does not hold.
\end{Rem}
Let us define the random time:
\[
\tau \doteq \inf\{t: \omega \in A_t^c \}.
\]
\begin{lemma}
$\tau$ is a stopping time of the filtration ${\mathbb F}$. If \eqref{eq:cond_left_cont_A_T} holds, then $A_T = \{\tau \geq T\}$.  
\end{lemma}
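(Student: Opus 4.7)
The plan is to first establish the stopping time property by exploiting the monotonicity of the family $(A_t)$, and then identify $A_T$ with $\{\tau \ge T\}$ up to a null set using the left-continuity assumption \eqref{eq:cond_left_cont_A_T}.

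For the stopping-time claim, the first step is to observe that since $(A_t)$ is decreasing, $(A_t^c)$ is increasing. I would then rewrite
\[
\{\tau < t\} \;=\; \bigcup_{\substack{s<t \\ s\in\bQ}} A_s^c.
\]
Since each $A_s^c \in \cF_s \subset \cF_t$ for rational $s<t$, this set lies in $\cF_t$. To pass from $\{\tau<t\}$ to $\{\tau\le t\}$, I would write $\{\tau \le t\} = \bigcap_{n\ge 1} \{\tau < t+1/n\}$ and invoke the right-continuity of $\bF$ to conclude $\{\tau\le t\}\in \cF_{t+}=\cF_t$. So $\tau$ is an $\bF$-stopping time.

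For the identification $A_T = \{\tau \ge T\}$, I would first derive the set-theoretic identity
\[
\{\tau \ge T\} \;=\; \bigcap_{s<T} A_s,
\]
which follows from the definition of $\tau$: $\tau(\omega)\ge T$ exactly means $\omega\notin A_s^c$ for all $s<T$. Since $(A_s)$ is decreasing and $A_T\subset A_s$ for every $s<T$, the inclusion $A_T\subseteq \{\tau\ge T\}$ is immediate. For the reverse inclusion (only almost surely), I would use the monotone convergence of probabilities on the decreasing family:
\[
\bP\Big(\bigcap_{s<T} A_s\Big) \;=\; \lim_{s\uparrow T}\bP(A_s).
\]
The left-continuity in probability \eqref{eq:cond_left_cont_A_T} gives $\lim_{s\uparrow T}\bP(A_s)=\bP(A_T)$, so the two sets $A_T$ and $\bigcap_{s<T}A_s$ have the same probability and one contains the other; hence they coincide up to a $\bP$-null set (which is absorbed by completeness of $\cF_T$).

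The main obstacle, if any, is purely notational: the statement of \eqref{eq:cond_left_cont_A_T} is what needs to be read carefully to confirm it really says $\bP(A_t)\to\bP(A_T)$ as $t\uparrow T$ (equivalently $\bP(A_t\setminus A_T)\to 0$, since $A_T\subset A_t$). Everything else reduces to elementary monotone manipulations of increasing/decreasing families of events together with right-continuity of $\bF$.
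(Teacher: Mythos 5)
Your proof is correct and takes essentially the same route as the paper: monotonicity of $(A_t)$ together with right-continuity of $\bF$ gives $\{\tau\le t\}\in\cF_t$ (the paper writes $\{\tau\le t\}=\bigcap_{n}A_{t+1/n}^c\in\bigcap_n\cF_{t+1/n}$ directly, while you pass through $\{\tau<t\}=\bigcup_{s<t,\,s\in\bQ}A_s^c$, which is the same idea), and your identification of $A_T$ with $\{\tau\ge T\}$ up to a null set via left-continuity in probability is the paper's argument stated for the complements, namely $\{\tau<T\}=\bigcup_n A_{T-1/n}^c\subset A_T^c$ and $\bP(A_T^c\setminus\{\tau<T\})=0$. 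Your reading of \eqref{eq:cond_left_cont_A_T} as $\lim_{t\uparrow T}\bP(A_t)=\bP(A_T)$ is the intended one, and, as in the paper, the asserted equality $A_T=\{\tau\ge T\}$ is an almost-sure equality, exactly as you note.
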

\begin{proof}
The definition of $\tau$ and $A_{s}^c \searrow A_t^c$ as $s \searrow t$
imply
\begin{equation}\label{e:talet}
\{ \tau \le t \} = \bigcap_{n=1}^\infty  A_{t+ 1/n}^c \in  \bigcap_{n=1}^\infty  \cF_{t+ 1/n}.
\end{equation}
The right continuity of the filtration ${\mathbb F}$ implies
$\{ \tau \le t \} \in {\mathcal F}_t$ i.e., $\tau$ is a stopping time.

Again the definition of $\tau$ and $A_{s}^c \searrow A_T^c$ as $s \searrow T$
imply $\{\tau < T\} = \bigcup_{n=1}^\infty A_{T-1/n}^c$ and 
\[
\{\tau < T\} \subset A^c_T.
\]
The continuity in probability at time $T$ of $(A_t, \ t \geq 0)$ implies
$${\mathbb P}( A^c_T \setminus \{\tau < T\}) =0,$$
which achieves the proof of the lemma. 
\end{proof}
Note that in general: $\{\tau > T\} \subset A_T$. To have the equality, we need a right continuity at time $T$.

Let us denote again by $Y^\infty$ the minimal solution of the BSDE \eqref{eq:bsde} with terminal condition $+\infty$ and:
$$A_{t_n} = A_n, \quad  \mathbf 1_{A_{t_n}}= \chi_n.$$
Let us define $\mathcal Y^n$ as the solution of the following BSDE: for all $t_n\leq t \leq T$
\begin{eqnarray*}
\mathcal Y^n_t  &=& \int_{t}^T \left[(f(s,(1-\chi_n)\mathcal Y^n_s,\mathcal Z^n_s,\mathcal U^n_s) -f^0_s\right] ds + \int_t^T (1-\chi_n) f^0_r dr\\
& -& \int_t^T \mathcal Z^n_s dW_s - \int_t^T \int_\cE \mathcal U^n_s \tpi(de,ds) - \int_t^T d \mathcal M^n_s.
\end{eqnarray*}
Namely we consider the BSDE with terminal value 0 and generator
$$\widetilde f (s,y,z,u)= \left[(f(s,(1-\chi_n)y,z,u) -f^0_s\right] + (1-\chi_n) f^0_s.$$ 
This driver $\widetilde f$ satisfies all assumptions {\bf (A)} and \ref{C4} holds, such that there exists a unique solution to this BSDE such that 
$$\bE \left[ \sup_{t\in [t_n,T]} |\mathcal Y^n_t|^\ell \right] \leq \bE \int_{t_n}^T |f^0_r|^\ell dr .$$ 
Moreover by comparison principle, a.s. for all $t\in [t_n,T]$, $\mathcal Y^n_t \geq 0$. Let us also remark that if $f^0\equiv 0$, then $\mathcal Y^n\equiv 0$.

Define $Y^{\infty,u,n}$ as the solution of the BSDE \eqref{eq:bsde} on $[0,t_n]$ with terminal condition
\[
Y^{\infty,u,n}_{T-1/n} =  \chi_n Y_{t_n}^{\infty} + (1- \chi_n)\mathcal Y^n_{t_n}.
\]
Note that from \eqref{eq:a_priori_estimate_Y_L}, this terminal condition is in $L^\ell(\Omega)$, hence the solution is well-defined on $[0,t_n]$. We extend $Y^{\infty,u,n}$ on the whole interval $[0,T]$: for all $t_n\leq t \leq T$:
$$Y^{\infty,u,n}_t = \chi_n Y^\infty_t + (1-\chi_n) \mathcal Y^n_t.$$
\begin{lemma}
The process $Y^{\infty,u,n}$ satisfies the dynamics of the BSDE \eqref{eq:bsde} on the whole interval $[0,T]$. Moreover a.s. 
$$\lim_{t\to T} Y^{\infty,u,n}_t = \infty \mathbf 1_{A_{T-1/n}}.$$
\end{lemma}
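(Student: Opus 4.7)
The plan is to verify the two assertions in turn: first, that the pasted process solves the BSDE on $[0,T)$; second, that its left limit at $T$ has the stated form. On $[0,t_n]$ the dynamics hold by construction of $Y^{\infty,u,n}$ as a BSDE solution with terminal condition $\chi_n Y^{\infty}_{t_n} + (1-\chi_n)\mathcal Y^n_{t_n}$ at $t_n$ (this terminal value lies in $L^\ell(\Omega)$ by the a priori bound \eqref{eq:a_priori_estimate_Y_L} and the integrability of $\mathcal Y^n$). On $[t_n,T)$ I set
$$
Z^{\infty,u,n}_r = \chi_n Z^{\infty}_r + (1-\chi_n)\mathcal Z^n_r,\quad
\psi^{\infty,u,n}_r = \chi_n \psi^{\infty}_r + (1-\chi_n)\mathcal U^n_r,
$$
and extend $M^{\infty,u,n}$ past $t_n$ by the increment $\chi_n(M^{\infty}_{\cdot} - M^{\infty}_{t_n}) + (1-\chi_n)(\mathcal M^n_{\cdot} - \mathcal M^n_{t_n})$. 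Since $\chi_n$ is $\cF_{t_n}$-measurable, it can be pulled inside the stochastic integrals on $[t_n,\cdot]$, and the two pasted pieces remain (local) martingales orthogonal to $W$ and $\tpi$.

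The algebraic identification of the driver is the point where the definition of $\widetilde f$ pays off. On $A_{t_n}$ one has $\chi_n=1$, so $Y^{\infty,u,n}=Y^{\infty}$ and the driver is simply $f(r,Y^{\infty},Z^{\infty},\psi^{\infty})$. On $A_{t_n}^c$ one has $\chi_n=0$, and then $\widetilde f(r,\mathcal Y^n,\mathcal Z^n,\mathcal U^n) = [f(r,\mathcal Y^n,\mathcal Z^n,\mathcal U^n)-f^0_r] + f^0_r = f(r,\mathcal Y^n,\mathcal Z^n,\mathcal U^n)$, which coincides with $f(r,Y^{\infty,u,n},Z^{\infty,u,n},\psi^{\infty,u,n})$. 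Combining the two $\cF_{t_n}$-cases and linearity of the integrals in $\chi_n$ yields the desired BSDE dynamics for $s,t\in[t_n,T)$.

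The main obstacle is gluing the two regimes at $t_n$. The BSDE on $[0,t_n]$ is càdlàg and has $Y^{\infty,u,n}_{t_n}$ equal to its prescribed terminal value, but a priori the orthogonal martingale part $M^{\infty,u,n}$ could jump at $t_n$, and likewise $Y^{\infty,u,n}$ could have $Y^{\infty,u,n}_{t_n-}\neq Y^{\infty,u,n}_{t_n}$. Here assumption \ref{H2} intervenes: since $\bF$ is left-continuous at $t_n$, every $\bF$-martingale satisfies $M_{t_n-}=\bE[M_{t_n}\mid\cF_{t_n-}]=M_{t_n}$, so no jump can occur at the deterministic time $t_n$. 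Thus the pasted process is càdlàg with no jump at $t_n$, and writing the BSDE over any $[s,t]\subset[0,T)$ (splitting at $t_n$ if needed) shows the dynamics hold on the whole interval.

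For the terminal behaviour I split again on $\chi_n$. On $A_{t_n}$, $Y^{\infty,u,n}_t = Y^{\infty}_t$ and the result of \cite{krus:popi:15} recalled in Section \ref{s:firstcase} gives $\lim_{t\to T}Y^{\infty}_t = +\infty$ a.s. On $A_{t_n}^c$, $Y^{\infty,u,n}_t = \mathcal Y^n_t$; the BSDE defining $\mathcal Y^n$ has finite terminal value $0$, belongs to the standard well-posed class (conditions \textbf{(A)} plus \ref{C4}), and, again by left-continuity of $\bF$ at $T$, $\mathcal Y^n$ has no jump at $T$, so $\lim_{t\to T}\mathcal Y^n_t = \mathcal Y^n_T = 0$. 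Putting the two cases together,
$$
\lim_{t\to T} Y^{\infty,u,n}_t \;=\; \chi_n\cdot(+\infty) + (1-\chi_n)\cdot 0 \;=\; \infty \cdot \mathbf 1_{A_{t_n}}
$$
a.s., which is the claim (with the $A_{T-1/n}$ in the statement read as $A_{t_n}$, consistent with the surrounding construction).
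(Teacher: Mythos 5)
Your proof is correct and follows the paper's skeleton in most respects: the identification of the driver on $[t_n,T)$ by splitting on the $\cF_{t_n}$-measurable indicator $\chi_n$ is exactly the paper's multiply-by-$\chi_n$ computation, the gluing at $t_n$ rests on \ref{H2} in the same way (left continuity of $\bF$ at $t_n$ excludes a martingale jump there), and the terminal limit is obtained by the same case split ($Y^\infty\to+\infty$ on $A_{t_n}$ from the supersolution property with $\xi=\infty$; $\mathcal Y^n\to 0$ on $A_{t_n}^c$, where your explicit appeal to left continuity of $\bF$ at $T$ fills in a step the paper leaves implicit). The genuine difference is in the point the paper singles out as ``the only remaining issue,'' namely that the pasted orthogonal component $M^{\infty,u,n}$ is a martingale on $[0,T)$. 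The paper resolves this indirectly: it solves the BSDE \eqref{eq:bsde} on $[0,t_{n+1}]$ with the $L^\ell$ terminal value $\chi_n Y^\infty_{t_{n+1}}+(1-\chi_n)\mathcal Y^n_{t_{n+1}}$, identifies its components with the pasted ones by uniqueness on $[t_n,t_{n+1}]$ and on $[0,t_n]$, and then uses \ref{H2} to rule out a jump of the martingale part at $t_n$. You instead define the extension of $M^{\infty,u,n}$ by the increments $\chi_n(M^\infty_\cdot-M^\infty_{t_n})+(1-\chi_n)(\mathcal M^n_\cdot-\mathcal M^n_{t_n})$ and argue directly; this works, and is arguably cleaner, because these increments are stochastic integrals of the bounded predictable integrands $\chi_n\1_{(t_n,\cdot]}$ and $(1-\chi_n)\1_{(t_n,\cdot]}$ against $M^\infty$ and $\mathcal M^n$, so orthogonality to $W$ and $\tpi$ is preserved, and concatenation across the deterministic time $t_n$ follows from the tower property. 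One refinement you should make explicit: the paper needs $M^{\infty,u,n}$ to be a \emph{true} martingale on $[0,t]$ for every $t<T$, not merely a local one; your argument delivers this as stated, since $M^\infty$ is a true martingale on $[0,t]$ for $t<T$ (from the moment bounds in the definition of the minimal supersolution of \cite{krus:popi:15}), $\mathcal M^n$ is a true martingale on $[t_n,T]$ from the $L^\ell$ well-posedness, and $\chi_n$ is bounded and $\cF_{t_n}$-measurable, so the conditional-expectation computation goes through without localization. With that hedge removed, your proof is a valid and somewhat more economical alternative to the paper's uniqueness detour.
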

\begin{proof}
Indeed by the very definition of $Y^\infty$, for any $t_n\leq t < s < T$, we have
$$Y^\infty_t  = Y^\infty_s +\int_t^s f(r,Y^\infty_r,Z^\infty_r,\psi^\infty_r) dr - \int_t^s Z^\infty_rdW_r -\int_t^s  \int_\cE \psi^\infty_r(e) \tpi(de,dr) -\int_t^s d M^\infty_r,$$ 
hence multiplying both sides by $\chi_n$, which is $\cF_{t_n}$-measurable, we obtain
\begin{eqnarray*}
\chi_nY^\infty_t & = & \chi_nY^\infty_s +\int_t^s \chi_nf(r,Y^\infty_r,Z^\infty_r,\psi^\infty_r) dr \\
&- & \int_t^s \chi_nZ^\infty_r dW_r -\int_t^s  \int_\cE\chi_n \psi^\infty_r(e) \tpi(de,dr) -\int_t^s d\chi_n M^\infty_r\\
& = &  \chi_nY^\infty_s +\int_t^s \left[ f(r,\chi_nY^\infty_r,\chi_nZ^\infty_r,\chi_n\psi^\infty_r) - f^0_r \right] dr +\int_t^s \chi_n f^0_r dr   \\
&- & \int_t^s \chi_nZ^\infty_r dW_r -\int_t^s  \int_\cE\chi_n \psi^\infty_r(e) \tpi(de,dr) -\int_t^s \chi_n d M^\infty_r.
\end{eqnarray*}
And from the definition of $\mathcal Y^n$, we have
\begin{eqnarray*}
(1-\chi_n) \mathcal Y^n_t  &=& \int_{t}^T(1-\chi_n) \left[(f(s,(1-\chi_n)\mathcal Y^n_s,\mathcal Z^n_s,\mathcal U^n_s) -f^0_s\right] ds + \int_t^T (1-\chi_n) f^0_r dr\\
& -& \int_t^T (1-\chi_n)\mathcal Z^n_s dW_s - \int_t^T \int_\cE(1-\chi_n) \mathcal U^n_s \tpi(de,ds) - \int_t^T (1-\chi_n)d \mathcal M^n_s \\
& =&  \int_{t}^T \left[(f(s,(1-\chi_n)\mathcal Y^n_s,(1-\chi_n)\mathcal Z^n_s,(1-\chi_n)\mathcal U^n_s) -f^0_s\right] ds + \int_t^T (1-\chi_n) f^0_r dr\\
& -& \int_t^T (1-\chi_n)\mathcal Z^n_s dW_s - \int_t^T \int_\cE(1-\chi_n) \mathcal U^n_s \tpi(de,ds) - \int_t^T (1-\chi_n)d \mathcal M^n_s
\end{eqnarray*}
Thereby $Y^{\infty,u,n}$ satisfies the dynamics of the BSDE \eqref{eq:bsde} on $[t_n,T)$. From our assumption \ref{H2}, there is no jump at time $t_n$. Hence $Y^{\infty,u,n}$ is continuous at time $t_n$ and we can define 
$$
Z_t^{\infty,u,n}\doteq \begin{cases}
Z_t^{\infty,u,n}, & t \leq t_n \\
\chi_n Z_t^{\infty} + (1-\chi_n)\cZ^n_t & t_n < t \le T,
\end{cases},$$
$$\psi_t^{\infty,u,n}(e) \doteq \begin{cases}
\psi_t^{\infty,u,n}(e), & t \le t_n \\
\chi_n \psi^\infty_t(e) + (1-\chi_n)\mathcal U^n_t(e) & t_n < t \le T,
\end{cases}.
$$
and 
$$M_t^{\infty,u,n} \doteq \begin{cases}
M_t^{\infty,u,n}, & t < t_n \\
\chi_n M^\infty_t + (1-\chi_n)\mathcal M^n_t& t_n \leq t \le T.
\end{cases}.
$$
Then we have that the process $(Y^{\infty,u,n},Z^{\infty,u,n},\psi^{\infty,u,n},M^{\infty,u,n})$ satisfies the dynamics of the BSDE \eqref{eq:bsde} on the whole interval $[0,T)$ and with the singular terminal value $\infty \mathbf 1_{A_{t_n}}$: a.s. 
$$\lim_{t\to T} Y^{\infty,u,n}_t = \infty \mathbf 1_{A_{t_n}}.$$
The only remaining issue concerns $M^{\infty,u,n}$; indeed it is not clear that it is a martingale on $[0,T)$. However $(Y^{\infty,u,n},Z^{\infty,u,n},\psi^{\infty,u,n},M^{\infty,u,n})$ has the dynamics of the BSDE \eqref{eq:bsde} on the interval $[0,t_{n+1}
]$, with terminal condition $\zeta = Y^{\infty,u,n}_{t_{n+1}} = \chi_n Y^\infty_{t_{n+1}} + (1-\chi_n) \mathcal Y^n_{t_{n+1}}$. This terminal value belongs to $L^\ell(\Omega)$. Hence there exists a unique solution $(y,z,v,m)$ to the BSDE \eqref{eq:bsde} with terminal condition $\zeta$. From uniqueness on $[t_n,t_{n+1}]$, $y=\chi_n Y^{\infty} + (1-\chi_n)\cY^n$ and $m=\chi_n M^\infty + (1-\chi_n)\mathcal M^n$ on this interval. And by uniqueness on $[0,t_n]$ for the BSDE with driver $f$ and terminal condition $y_{t_n}$, $y=Y^{\infty,u,n}$ and $m=M^{\infty,u,n}$ on $[0,t_n]$. Since the martingale $m$ has no jump at time $t_n$ (Hypothesis \ref{H2}), we obtain that $M^{\infty,u,n}$ is a martingale on $[0,t_{n+1}]$ and thus on $[0,T)$. 
\end{proof}

Let us consider $Y^L$ the approximation of the minimal solution $Y$. Let us prove 
\begin{lemma}
A.s. for all $t\in [0,T]$, $L$ and $n$
$$0\leq Y^L_t \leq Y^{u,\infty,n}_{t}.$$
\end{lemma}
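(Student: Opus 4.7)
The strategy is to prove the upper bound first on the interval $[t_n,T]$ by a case analysis on the $\mathcal{F}_{t_n}$-measurable event $A_{t_n}$, and then to propagate it backwards to $[0,t_n]$ by the standard BSDE comparison principle applied with the fixed terminal time $t_n$. The non-negativity $0\le Y^L_t$ is disposed of at the outset: the terminal $L\mathbf{1}_{A_T}$ is non-negative and condition \ref{C4} gives $f(t,0,0,0)=f^0_t\ge 0$, so the zero process is a subsolution of the BSDE solved by $Y^L$ and comparison yields $Y^L\ge 0$.

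\textbf{Event-splitting on $[t_n,T]$.} On the event $A_{t_n}$ one has $\chi_n=1$ and therefore $Y^{\infty,u,n}_t=Y^\infty_t$ for $t\in[t_n,T]$; the bound $Y^L\le Y^\infty$ holds globally, since by comparison $Y^L$ is dominated by the solution of \eqref{eq:bsde} with constant terminal $L$, and the latter is in turn dominated by the monotone limit $Y^\infty$ used in its construction. On the complement $A_{t_n}^c$ the inclusion $A_T\subset A_{t_n}$ forces $L\mathbf{1}_{A_T}=0$, and $\chi_n=0$ makes the driver $\widetilde f$ in the definition of $\mathcal{Y}^n$ coincide with $f$. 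Multiplying the BSDE for $Y^L$ on $[t_n,T]$ by the $\mathcal{F}_{t_n}$-measurable indicator $\mathbf{1}_{A_{t_n}^c}$, the tuple $(\mathbf{1}_{A_{t_n}^c}Y^L,\mathbf{1}_{A_{t_n}^c}Z^L,\mathbf{1}_{A_{t_n}^c}\psi^L,\mathbf{1}_{A_{t_n}^c}M^L)$ solves a BSDE with driver $\bar f(r,y,z,u):=\mathbf{1}_{A_{t_n}^c}f(r,y,z,u)$ and zero terminal at $T$; the same multiplication applied to $\mathcal{Y}^n$ produces a solution of the same BSDE (using that $\widetilde f$ reduces to $f$ on $A_{t_n}^c$). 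Since $\bar f$ inherits the assumptions \textbf{(A)} from $f$, uniqueness of BSDE solutions forces $\mathbf{1}_{A_{t_n}^c}Y^L=\mathbf{1}_{A_{t_n}^c}\mathcal{Y}^n$, i.e., $Y^L=\mathcal{Y}^n=Y^{\infty,u,n}$ on $A_{t_n}^c\cap[t_n,T]$. Combining the two cases yields $Y^L_t\le Y^{\infty,u,n}_t$ for $t\in[t_n,T]$.

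\textbf{Propagation and main obstacle.} In particular $Y^L_{t_n}\le Y^{\infty,u,n}_{t_n}$. On $[0,t_n]$ both $Y^L$ and $Y^{\infty,u,n}$ solve the BSDE \eqref{eq:bsde} with driver $f$ and respective terminals $Y^L_{t_n}$ and $Y^{\infty,u,n}_{t_n}$ at the fixed time $t_n$, so the standard comparison principle recalled after conditions \textbf{(A)} extends the inequality to all of $[0,t_n]$. The step I expect to be most delicate is the ``freezing'' argument on $A_{t_n}^c$: one has to check that $\bar f$ genuinely satisfies \textbf{(A)} (the monotonicity in \ref{A1}, the kernel representation \ref{A3}, and the Lipschitz property \ref{A4}), that $\mathbf{1}_{A_{t_n}^c}Z^L$, $\mathbf{1}_{A_{t_n}^c}\psi^L$ and $\mathbf{1}_{A_{t_n}^c}M^L$ remain admissible, and that after multiplication the nonlinear driver term can indeed be rewritten in terms of the new tuple — this last point reduces to the identity $\mathbf{1}_{A_{t_n}^c}f(r,Y^L,Z^L,\psi^L)=\mathbf{1}_{A_{t_n}^c}f(r,\mathbf{1}_{A_{t_n}^c}Y^L,\mathbf{1}_{A_{t_n}^c}Z^L,\mathbf{1}_{A_{t_n}^c}\psi^L)$, which holds because on $A_{t_n}$ both sides vanish.
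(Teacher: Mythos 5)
Your route is genuinely different from the paper's and, up to one inaccuracy discussed below, it works. The paper does not split on the event $A_{t_n}$: it linearizes the difference $\what Y = Y^{\infty,u,n}-Y^L$ on all of $[0,T)$, invokes the representation of linear BSDEs (Lemma 10 of \cite{krus:popi:14}) to get $\what Y_s \geq \bE[\what Y_t\,\Gamma_{s,t}+\int_s^t \Gamma_{s,u}(f^0_u-L)^+du\,|\,\cF_s]$ with a nonnegative exponential kernel $\Gamma$, bounds $\what Y_t\geq -(1+T)L$, and then sends $t\nearrow T$ with Fatou, using $\liminf_{t\to T}\what Y_t=\infty\,\mathbf 1_{A_{t_n}}-L\,\mathbf 1_{A_T}\geq 0$ because $A_T\subset A_{t_n}$. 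Your argument instead exploits the explicit form of $Y^{\infty,u,n}$ on $[t_n,T]$ (namely $\chi_n Y^\infty+(1-\chi_n)\mathcal Y^n$), splits on the $\cF_{t_n}$-measurable event $A_{t_n}$ with the same indicator-multiplication trick the paper uses in the preceding lemma, and then propagates back through the deterministic time $t_n$ by ordinary comparison. What your approach buys is the avoidance of the linearization lemma, of the Fatou argument, and of any limit at the singular time $T$: you only ever compare BSDEs with integrable terminal data at deterministic horizons. What the paper's approach buys is that it never needs the explicit structure of the upper process beyond its terminal behaviour, so it is the argument that generalizes when no such decomposition is at hand; it is also where the truncation error $(f^0-L)^+$ is handled explicitly.

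That last point is the one genuine misstep in your write-up: $Y^L$ solves the \emph{truncated} BSDE, with driver $f^L(t,y,z,\psi)=[f(t,y,z,\psi)-f^0_t]+(f^0_t\wedge L)$, not with $f$. Consequently, on $A_{t_n}^c$ the multiplied tuple $(\mathbf 1_{A_{t_n}^c}Y^L,\dots)$ solves the BSDE with driver $\mathbf 1_{A_{t_n}^c}f^L$, while $(\mathbf 1_{A_{t_n}^c}\mathcal Y^n,\dots)$ solves the one with driver $\mathbf 1_{A_{t_n}^c}f$ (since $\widetilde f=f$ there); these differ by $\mathbf 1_{A_{t_n}^c}(f^0-L)^+$, so your uniqueness argument does not give $\mathbf 1_{A_{t_n}^c}Y^L=\mathbf 1_{A_{t_n}^c}\mathcal Y^n$ unless $f^0\leq L$. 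The conclusion you need survives with a one-line fix: by \ref{C4} one has $f^0\geq 0$, hence $f^L\leq f$ pointwise, and the comparison principle (both drivers satisfy {\bf (A)}, terminal values $0\leq 0$) yields $\mathbf 1_{A_{t_n}^c}Y^L\leq \mathbf 1_{A_{t_n}^c}\mathcal Y^n$ on $[t_n,T]$, which is all that is required. The same correction applies to the propagation step on $[0,t_n]$ and to the bound $Y^L\leq Y^\infty$ on $A_{t_n}$: there compare the truncated driver $f^L$ and terminal $L\,\mathbf 1_{A_T}\leq k$ against the approximating solutions $Y^{(k)}$, $k\geq L$, with drivers $f^k\geq f^L$, and pass to the limit, exactly as in Lemma \ref{l:Ykupperbound}.
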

\begin{proof}
Fix $L > 0$ and recall that $(Y^L, Z^L,U^L,M^L)$ denotes the solution of truncated BSDE with terminal condition 
$$Y_T^L =\xi\wedge L = L \mathbf 1_{A_T}.$$ 
Set
 $$\what Y_s = Y^{u,\infty,n}_s - Y^{L}_s, \quad \what Z_s = Z^{u,\infty,n}_s - Z^{L}_s, \quad \what U_s(e) = U^{u,\infty,n}_s(e) - U^{L}_s(e), \quad \what M_s = M^{u,\infty,n}_s - M^{L}_s.$$
We have
\begin{eqnarray*}
&& f(t,Y^{u,\infty,n}_t,Z^{u,\infty,n}_t,U^{u,\infty,n}_t) - f(t,Y^L_t,Z^L_t,U^L_t) \\
&& \quad =  -c_t \what Y_t  + b_t \what Z_t + (f(t,Y^L_t,Z^L_t,U^{u,\infty,n}_t) - f(t,Y^L_t,Z^L_t,U^L_t))
\end{eqnarray*}
with
\begin{eqnarray*}
-c_t & =& \frac{f(t,Y^{u,\infty,n}_t,Z^{u,\infty,n}_t,U^{u,\infty,n}_t) - f(t,Y^L_t,Z^{u,\infty,n}_t,U^{u,\infty,n}_t)}{\what Y_t} \1_{\what Y_t \neq 0}
\end{eqnarray*}
and 
\begin{eqnarray*}
b_t & =& \frac{f(t,Y^{u,\infty,n}_t,Z^{u,\infty,n}_t,U^{u,\infty,n}_t) - f(t,Y^L_t,Z^L_t,U^{u,\infty,n}_t)}{\what Y_t} \1_{\what Z_t \neq 0}.
\end{eqnarray*}
Note that from our setting, $-c_t \leq \chi$ and $|b_t| \leq K$. For every $t < T$ the process $(\what Y,\what Z, \what U, \what M)$ solves the BSDE
\begin{eqnarray*}
d\what Y_s & = & \left[ c_s \what Y_s -b_s \what Z_s - (f^0_s - L)^+ - (f(s,Y^L_s,Z^L_s,\psi^{u,\infty,n}_s) - f(s,Y^L_s,,Z^L_s,\psi^L_s)) \right] ds \\
& + & \what Z_s dW_s +  \int_\cZ \what \psi _s(z) \tpi(dz,ds) + d\what M_s
\end{eqnarray*}
on $[0, t]$ with terminal condition $\what Y_t = Y^{u,\infty,n}_t - Y^L_t$. Moreover it holds that 
$$f(s,Y^L_s,Z^L_s,\psi^{u,\infty,n}_s) - f(s,Y^L_s,Z^L_s,\psi^L_s)\geq \int_\cZ  \kappa_s^{Y^L,\psi^L,\psi'} \what \psi_ s(z) \mu(dz).$$
From Lemma 10 in \cite{krus:popi:14}, we have
\begin{eqnarray*}
\what Y_s & \geq  &  \bE \left[\what Y_t \Gamma_{s,t}   + \int_s^t \Gamma_{s,u} (f^0_u - L)^+ du \bigg| \cF_s \right]
\end{eqnarray*}
where $\Gamma_{s,t} = \exp\left( - \int_s^t c_u du +\frac{1}{2}  \int_s^t (b_u)^2 du - \int_s^t b_u dW_u\right) \zeta_{s,t}$ with $\zeta_{s,s}=1$ and
\begin{equation*}
d\zeta_{s,t} = \zeta_{s,t^-}  \int_\cZ \kappa_t^{Y^L,\psi^L,\psi'}\tpi(dz,dt).
\end{equation*}
Our assumptions ensure that $\zeta$ is non negative and belongs to $\bH^k(0,T)$. We have $Y^L_t \leq (1+T)L$ and hence $\what Y_t \geq -(1+T)L$. Thus $\what Y  \Gamma_{s,.}$ is bounded from below by a process in $\bS^m(0,T)$ for some $m>1$. We can apply Fatou's lemma to obtain
\begin{eqnarray*}
\what Y_s & = & \liminf_{t \nearrow T} \bE \left[\what Y_t  \Gamma_{s,t} + \int_s^t \Gamma_{s,u}(f^0_u - L)^+ du \bigg| \cF_s \right]  \geq  \bE \left[  \liminf_{t \nearrow T} (\what Y_t  \Gamma_{s,t})  \bigg| \cF_s \right].
\end{eqnarray*}
The process $(\Gamma_{s,t}, \ s\leq t\leq T)$ is c\`adl\`ag and non negative. Hence a.s.
$$ \liminf_{t \nearrow T} (\what Y_t  \Gamma_{s,t}) = (\liminf_{t \nearrow T} \what Y_t) \Gamma_{s,T^-}.$$
But 
$$\liminf_{t \nearrow T} \what Y_t = \infty \mathbf 1_{A_{T-1/n}} - L \mathbf 1_{A_{T}} \geq 0$$
since $A_T \subset A_{T-1/n}$. 
Finally, $Y^{u,\infty,n}_s \geq Y^L_s$ for any $s \in [0,T]$ and $L\geq 0$. 
\end{proof}

Let us now conclude about the continuity of $Y$ at time $T$. We know now that a.s. 
$$0\leq Y^L_t \leq Y_t \leq Y^{u,\infty,n}_t$$
and we want to prove that for a.e. $\omega \in  A_T^c$, 
$$\lim_{t\to T} Y_t= 0.$$
Recall that 
$$\bP \left(\bigcap_{t<T} A_t \setminus A_T \right) = 0.$$
Let us fix $\omega \in A_T^c$. We can assume (with probability 1) that $\omega$ belongs to $\bigcup_{t<T} A_t^c$, that is there exists $n$ such that $\omega \in A_{T-1/n}^c$. Hence 
$$\limsup_{t\to T} Y_t(\omega) \leq Y^{u,\infty,n}_T(\omega) = 0.$$

\section{Density formula in terms of Green's function}\label{s:density}

As noted in the introduction, 
one of the key ingredients in \cite{krus:popi:seze:18} 
in the analysis of the terminal
condition ${\bm 1}_{\{\tau_0 < T\}}$ was the explicit formula available for the density of $\tau_0$,
the first exit time of the Brownian motion from an interval $(a,b).$ 
The natural framework for the generalization of this formula to higher dimensions is the 
duality between Potential theory, elliptic / parabolic PDE and Diffusion processes \cite{doob2012classical}.
Within this duality the exit times and the distribution of the path of the process up to the exit time
corresponds to Green's functions \cite{oksendal2003stochastic}. The paper \cite{delarue2013first}
uses the connection between hitting times and Green's functions to prove that the exit time of a one
dimensional diffusion from a region has a density. A similar one dimensional computation is also given in \cite{peskir2006optimal}.
Although the term ``Green's function'' doesn't appear in them,
the works \cite{iyengar1985hitting,metzler2010first} compute the Green's function for the Brownian motion
in rectangular domains using the method of images; the work \cite{blanchet2013hitting} extends this to three dimensions.
The work \cite{patie2008first},
represents the distribution of the exit time of a $d$-dimensional diffusion from a fixed domain
as the solution of a parabolic PDE. It identifies a smooth solution to the PDE whose derivative gives the density of the stopping time.
The solution of the same PDE can be expressed in terms of the Green's function derived in the classical PDE book \cite{friedman} by
Friedman for the underlying parabolic PDE.
The same Green's function can be used to prove that exit times of diffusions from domains that vary over time have densities.
Given the duality between Green's functions and exit times, this is a natural result. But we have not been able
to identify a reference in the current literature stating and proving it and therefore give its details in the present work.

The time variable
in \cite{friedman} corresponds to the time to maturity in the present setup. We state all definitions and results 
from \cite{friedman} in terms of the time variable adopted in the present work (which is the one commonly used in the the stochastic
processes framework); therefore, for example, the initial condition of \cite{friedman} becomes the terminal
condition and $t$ derivatives are multiplied by $-$.

Let ${\mathcal L}$ denote the parabolic operator associated with $X$:
\[
{\mathcal L}u \doteq \langle \sigma(x,t), \sigma(x,t) Hu \rangle  + \langle b(x,t), \nabla_x u \rangle + \frac{\partial u}{\partial t},
\]
where $Hu$ is the Hessian  matrix of second derivatives of $u$.
if we define
\[
a = \sigma \sigma'
\]
the first term can also be written as $\langle a, Hu \rangle.$ To be able to use the results in \cite{friedman} we adopt all of the
assumptions it makes on $a$ and $b$, these are listed on \cite[page 8]{friedman}: $a$ is uniformly elliptic; $a$ and $b$ are Holder
continuous.
The formal definition of Green's function is as follows (\cite[page 82]{friedman}):
\begin{definition}
A function $G(x,t,y,s)$ defined and continuous for $(x,t,y,s) \in \bar{D} \times (D \cup B)$, $t < s$ is called
a {\em Green's function} of ${\mathcal L}u = 0$ in $D$ if for any $0\le s \le T $ and for any continuous function $f$ on $D_s$ 
having a compact support the function
\[
u(x,t) = \int_{D_s} G(x,t,y,s)f(y) dy
\]
is a solution of $Lu = 0$ in $D \cap \{ 0 \le t < s \}$ and it satisfies the terminal and boundary conditions
\begin{align*}
\lim_{t\rightarrow s} u(x,t) &= f(x), x \in \overline{D}_s\\
u(x,t) &= 0, (x,t) \in S \cap \{ 0 \le t < s \}.
\end{align*}
\end{definition}

The main result claiming the existence of Green's functions associated with $X$ is \cite[Theorem 16, page 82]{friedman}.
This result is based on the following assumptions on the domain $D$ (listed as conditions
 $\overline{E}$ and $\overline{\overline{E}}$ on \cite[pages 64,65]{friedman}):
\begin{assumption}\label{as:ebar}
For every point $(x,t) \in \overline{S}$ there exists an $(n+1)$-dimensional neighborhood $V$ such that
$V \cap \overline{S}$ can be represented in the form
\[
x_i = h(x_1,...,x_{i-1},x_{i+1},...,x_n,t)
\]
for some $i \in \{1,2,3,...,n\}$, $h$, $D_x h$, $D_x^2h$ and $D_t h$ exist and are Holder continuous (exponent $\alpha$);
$D_xD_th$, $D_t^2h$  exist and are continuous.
\end{assumption}

The Green's function $G$ allows one to compute not just the distribution of the
exit time of $X$ from a fixed domain but from a domain varying in time such as $D$; in fact it allows one to compute
expectations of the form ${\mathbb E}_{(x,t)}[ g(X_s) 1_{\{\tau > s \}} ]$, $s > t.$
\begin{proposition}
Suppose $G$ is the Green's function of the operator ${\mathcal L}.$
Then 
\begin{equation}\label{e:expginG}
{\mathbb E}_{(x,t)}[ g(X_s) 1_{\{\tau > s \}} ]= \int_{D_s} g(y) G(x,t,y,s) dy,
\end{equation}
for any bounded continuous function $g$.
\end{proposition}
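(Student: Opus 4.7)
The plan is to set
\[
u(x,t) := \int_{D_s} g(y) G(x,t,y,s) \, dy
\]
and to identify $u(x,t)$ with the probabilistic quantity on the right-hand side of \eqref{e:expginG} via It\^o's formula. I would first treat the case when $g$ is continuous with compact support in $D_s$; the general bounded continuous case then follows by standard cutoff approximation, using the Gaussian bounds on $G$ from \cite{friedman} to justify dominated convergence on both sides.

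For such $g$, the defining property of the Green's function says that $u$ solves $\mathcal{L}u = 0$ in $D \cap \{0\le t'<s\}$, is continuous on $\bar D \times [0,s]$ with $u(x,s)=g(x)$ on $\bar D_s$, and vanishes on the lateral boundary $S \cap \{0 \le t' < s\}$. Uniform ellipticity and H\"older continuity of $a,b$ give $C^{2,1}$ regularity of $u$ in the interior of $D \cap \{0\le t'<s\}$. I would then start $X$ at $(x,t)$ with $t<s$ and introduce exhausting stopping times
\[
\tau_n := \inf\{r > t : \mathrm{dist}((r,X_r),\partial D) < 1/n\} \wedge (s - 1/n),
\]
so that $(X_r, r)$ stays in a compact subset of the interior of $D \cap \{0\le t'<s\}$ for $r \in [t,\tau_n]$. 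It\^o's formula combined with $\mathcal{L}u\equiv 0$ gives
\[
u(X_{\tau_n},\tau_n) = u(x,t) + \int_t^{\tau_n} \langle \nabla_x u(X_r,r), \sigma(X_r,r) dW_r\rangle,
\]
and since the integrand is bounded on $[t,\tau_n]$ the stochastic integral is a true martingale, so $u(x,t) = {\mathbb E}_{(x,t)}[u(X_{\tau_n},\tau_n)]$.

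To pass to the limit as $n\to\infty$, I would split on the events $\{\tau>s\}$ and $\{\tau\le s\}$. On $\{\tau>s\}$ the continuity of the paths of $X$ and Assumption \ref{as:ebar} on $\partial D$ imply $\tau_n \nearrow s$ and $X_{\tau_n}\to X_s \in D_s$, so continuity of $u$ up to the terminal face yields $u(X_{\tau_n},\tau_n)\to g(X_s)$. On $\{\tau\le s\}$, the pair $(X_{\tau_n},\tau_n)$ converges to a point of $S \cap \{0 \le t'<s\}$ where $u$ vanishes, so $u(X_{\tau_n},\tau_n)\to 0$. A maximum principle (or direct estimation from the representation $u = \int g\,G\,dy$) shows $|u| \le \sup |g|$, so dominated convergence gives
\[
u(x,t) = {\mathbb E}_{(x,t)}\bigl[g(X_s) \mathbf{1}_{\{\tau > s\}}\bigr],
\]
which is the claimed identity.

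The main obstacle is the boundary behavior of $u$: the Green's function definition only guarantees continuity, not $C^{2,1}$ regularity, up to the lateral boundary $S$, which forces the It\^o argument to be carried out strictly inside $D$ and the limit to be justified by uniform bounds together with path continuity of $X$. Identifying the limit correctly along $\{\tau>s\}$ requires using the fact (from Assumption \ref{as:ebar}) that $\partial D$ is smooth enough that $X$ cannot approach it tangentially along trajectories that remain in $D$ up to time $s$, so that $X_{\tau_n}$ indeed converges into the terminal face $D_s$ and not to a point of $S$.
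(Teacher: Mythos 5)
Your proof is correct and follows essentially the same route as the paper: define $u(x,t)=\int_{D_s}g(y)G(x,t,y,s)\,dy$, use the Green's function characterization together with It\^o's formula for compactly supported $g$ (the paper states this step in one line, you justify it by localizing inside $D$), and then extend to bounded continuous $g$ by approximation and bounded convergence. The only minor imprecision is the case $\tau=s$, where $(X_{\tau_n},\tau_n)$ converges to a point of $\partial D_s\times\{s\}$ rather than of $S\cap\{0\le t'<s\}$; since $g$ has compact support in $D_s$, $u$ still tends to $0$ there, so your conclusion is unaffected.
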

\begin{proof}
If $g$ has compact support in $D_s$, we know by the definition of $G$ that
\[
u(x,t)= \int_{D_s} g(y) G(x,t,y,s) dy,
\]
is a smooth solution of ${\mathcal L}u =0$ that is continuous in $\overline{D}|_{[0,s]}$ with $u=0$ on $S$
and $u=g$ on $D_s.$ Ito's formula applied to $u(X_t,t)$ gives \eqref{e:expginG}.
Thus it only remains to treat the case when $g$ doesn't have compact support in $D_s.$ Let $g_n$ be a sequence of
continuous functions with compact support in $D_s$ converging up to $g$. Then
\[
{\mathbb E}[g(X_s) 1_{\{\tau > s \}} ]=
\lim_{n\rightarrow \infty} {\mathbb E}[g_n(X_s) 1_{\{\tau > s \}} ]
+ {\mathbb E}[g(X_s) 1_{\{\tau > s \}} {\bm 1}_{\partial D_s}(X_s)].
\]
The assumptions made on $a$ and $b$ imply that $X_s$ has a density in ${\mathbb R}^n$ and in particular
the second expectation above is $0$. Therefore:
\begin{align*}
{\mathbb E}[g(X_s) 1_{\{\tau > s \}} ]&= \lim_{n\rightarrow \infty} {\mathbb E}[g_n(X_s) 1_{\{\tau > s \}} ]\\
&=\lim_{n\rightarrow \infty} 
 \int_{D_s} g_n(y) G(x,t,y,s) dy
=
 \int_{D_s} g(y) G(x,t,y,s) dy,
\end{align*}
where the last equality follows from the bounded convergence theorem.

\end{proof}
Setting $g=1$ in \eqref{e:expginG} we get the following formula for ${\mathbb P}_{(x,t)}( \tau  > s)$:
\[
{\mathbb P}_{(x,t)} (\tau > s) = \int_{B_T}  G(x,t,y,s) dy;
\]
The density of the exit time $\tau$ is then
\begin{equation}\label{e:derofdistribution}
-\frac{\partial }{\partial s} \int_{D_s}  G(x,t,y,s) dy,
\end{equation}
whenever this derivative exists. 
When the domain $D_t$ is constant, i.e., when $D_t = D_0$ for all $t$, the above derivative is simply
\begin{equation}\label{e:derofF2}
-\frac{\partial }{\partial s} \int_{D_0}  G(x,t,y,s) dy,
=
-\int_{D_s}  G_s(x,t,y,s) dy = -\int_{D_0}  G_s(x,t,y,s) dy,
\end{equation}
whenever $G_s$ exists and is continuous (by differentiation under the integral sign, see, e.g. \cite{apostol}).
Its computation in the presence of a time dependent domain $D_t$ is known as the Leibniz formula or the ``Reynolds Transport Theorem''
\cite{flanders1973differentiation,cortez2013pdes}. All of the statements of this formula we have come across in the literature
assume that the domain $D_t$ is given as the image of a smooth flow ${\bm x}(\cdot,t): D_0 \mapsto D_t.$  Assume for now
$D_t$ can be represented as the image of $D_0$ under a smooth flow ${\bm x}$ and let $v$ denote the vector field
defined by the flow (see the paragraph following Lemma \ref{l:contGs} below for comments on the flow representation of $D_t$).
Leibniz formula given in \cite{flanders1973differentiation, cortez2013pdes} implies:
\begin{equation}\label{e:derofF}
-\frac{\partial }{\partial s} \int_{D_s}  G(x,t,y,s) dy,
=
\int_{D_s}  G_s(x,t,y,s) dy +  \int_{\partial D_s}  G(x,t,y,s) \langle v, N\rangle dS,
\end{equation}
where $N$ is the unit vector field on $\partial D_s.$
A comparison of this with \eqref{e:derofF2} shows that
the second term in \eqref{e:derofF} is the additional term arising from the fact that $D_t$ varies in time.
But by its construction  the Green's function $G$ is $0$ on $\partial D$ (\cite[Corollary 1, page 83]{friedman}), 
therefore this additional term is in fact $0$!
Then in the computation of the density of $\tau$, allowing
the domain to vary in time doesn't have a direct impact on the density formula, (i.e, the formula \eqref{e:derofF2} works
both for time dependent domains as well as those that are independent of time).

Second observation about \eqref{e:derofF}: for the derivative 
\eqref{e:derofdistribution} to exist we need the partial derivative
of $G$ with respect to $s$.
We know by \cite[Theorem 16, page 82]{friedman} that $G$ is differentiable in its $t$ and $x$ variables. But this result
does not directly address the smoothness of $G$ in the $s$ variable. One way to get smoothness of $G$ in the $s$ variable
is to work with the Green's function $G^*$ of the adjoint operator ${\mathcal L}^*$ defined as follows:
\[
{\mathcal L}^*u \doteq \langle a, Hu \rangle + \langle b^*, \nabla_x u \rangle + c^*u -\frac{\partial u}{\partial t} = 0,
\]
where
\begin{equation}\label{d:bstar}
b^*_i = -b_i + 2 \sum_{j=1}^n \frac{\partial a_{i,j}}{\partial x_j}, c^* = -\sum_{i=1}^n \frac{\partial b_i}{\partial x_i}
+ \sum_{i,j=1}^n \frac{\partial^2 a_{i,j}}{\partial x_i \partial x_j}.
\end{equation}
For $G^*$ to exist and be smooth in its $x$ and $t$ variables it suffices that $b^*$ and $c^*$ be uniformly Holder continuous
(the uniform ellipticity of $a$ is already assumed).

\begin{lemma}\label{l:contGs}
Let $b^*_i$ and $c^*$ of \eqref{d:bstar} be uniformly Holder continuous. Then $G$ is differentiable in $s$ with a continuous derivative $G_s.$
\end{lemma}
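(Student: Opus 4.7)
The plan is to exploit the classical duality between the Green's function $G$ of $\mathcal{L}$ and the Green's function $G^*$ of the adjoint operator $\mathcal{L}^*$. The key identity is
\[
G(x,t,y,s) \;=\; G^*(y,s,x,t),
\]
where the roles of the ``initial'' and ``terminal'' pair are exchanged. This identity, combined with smoothness of $G^*$ in its first two arguments, will immediately give smoothness of $G$ in the pair $(y,s)$, which in particular yields the continuous derivative $G_s$ we need.

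To make this work, I would first check that $\mathcal{L}^*$ satisfies the hypotheses of \cite[Theorem 16, p.~82]{friedman}. The ellipticity of $a$ is preserved since the leading coefficient of $\mathcal{L}^*$ is again $a$. By assumption of the lemma, $b^*$ and $c^*$ are uniformly H\"older continuous, so the lower-order coefficients of $\mathcal{L}^*$ meet Friedman's regularity requirements. The domain $D$ is the same, so Assumption \ref{as:ebar} still holds. Consequently, Friedman's construction produces a Green's function $G^*(y,s,x,t)$, defined and continuous for $s<t$, and differentiable in $(y,s)$ with continuous derivatives of the relevant orders (see also \cite[Theorem 17, p.~83]{friedman}, which gives the joint continuity of $G^*$ and its derivatives in all four arguments on the open region $\{s<t\}$).

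Next I would verify the duality identity. Fix a terminal time $s$ and a bounded continuous $f$ supported in $D_s$. The function
\[
u(x,t) \;=\; \int_{D_s} G(x,t,y,s)\,f(y)\,dy
\]
solves $\mathcal{L}u=0$ in $D\cap\{t<s\}$ with terminal value $f$ on $D_s$ and zero lateral boundary data, by definition of $G$. Symmetrically, the function $(y,s)\mapsto \int_{D_t}G^*(y,s,x,t)g(x)\,dx$ solves the adjoint problem. A standard Green's identity argument, integrating $v\,\mathcal{L}u - u\,\mathcal{L}^*v$ over the cylindrical region between two times and using the vanishing boundary conditions on the lateral boundary (and the strict positivity of $G$ and $G^*$ on the interior), gives $G(x,t,y,s)=G^*(y,s,x,t)$ on $\{t<s\}$. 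This is classical; I would cite it from Friedman (Sections 1.6--1.8 of Chapter 1 develop exactly these tools) rather than reproduce the computation.

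Once the duality identity is established, the conclusion is immediate: partial differentiation in $s$ of $G(x,t,y,s)$ corresponds to partial differentiation of $G^*$ in its second argument, which exists and is continuous by the application of Friedman's theorem to $\mathcal{L}^*$. The main delicate point — and the only non-routine step — is ensuring the duality identity applies with the low regularity assumed here; Friedman's construction of $G$ and $G^*$ via parametrix actually provides enough smoothness of each kernel in its own ``spatial'' and ``time'' variables for the Green's identity computation to be valid on $\{t<s\}$, so this is not a real obstacle, but it is the step where the argument must be done carefully.
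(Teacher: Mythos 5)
Your proof is correct and follows essentially the same route as the paper: apply Friedman's Theorem 16 to the adjoint operator $\mathcal{L}^*$ (whose coefficients are uniformly H\"older by hypothesis) to get $G^*$ with a continuous time-derivative, then use the duality $G(x,t,y,s)=G^*(y,s,x,t)$ — which the paper simply cites as Friedman's Theorem 17 rather than re-deriving via Green's identity — to conclude $G_s=G^*_t$ exists and is continuous.
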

\begin{proof}
The assumptions on $b^*_i$ and $c^*$ imply that the adjoint operator ${\mathcal L}^*$ satisfies the conditions of
\cite[Theorem 16, page 82]{friedman} which says that ${\mathcal L}^*$ has associated with it a Green's function $G^*$
that is differentiable in $t$ with a continuous derivative $G^*_t.$
By \cite[Theorem 17, page 84]{friedman} $G$ and $G^*$ are dual, i.e.,
\[
G(x,t,y,s) = G^*(y,s,x,t);
\]
this and the $G_s = G^*_t$ imply the statement of the lemma.
\end{proof}

Even though in the end it has no influence on the final expression of the density,
we need the existence of a continuously differentiable flow 
${\bm x}$ that generates the domain $D$ to 1) invoke Leibniz rule and 2) to show that the resulting
density is continuous.
Many papers working on PDE with time
dependent domains use this assumption \cite{cannarsa1990damped,burdzy2004heat,cortez2013pdes}. 
Friedman's classical
book \cite{friedman} on parabolic PDE,
on which most of the arguments above are based, does not contain this
assumption directly. However, the assumptions already made on $D$ do indeed imply
that $D_t$ can be represented as the forward image of $D_0$ under a smooth flow ${\bm x}.$ To find
such a flow one can proceed as follows: first use the local graph representation of $\partial D$ given
in Assumption \ref{as:ebar} to define a flow on $\partial D$ as follows: 
\[
{\bm x}(x,t) = (h(x_2,x_3,...,x_d,t), x_2,x_3,...,x_d,t),
\]
where this definition is made in a neighborhood of $(x_0,t_0) \in \partial D$ where the graph of $h$
represents a portion of $\partial D.$ That $h$ is $C^1$ implies that ${\bm x}$ defined as above is a smooth
flow on $\partial D.$ One can now extend this flow to all of ${\mathbb R}^d$ using classical results
on the possibility of such an extension (see e.g., \cite[page 584]{burdzy2004heat} or 
\cite[page 201, Extension lemma for vector fields on submanifolds]{lee2013smooth}). 
That $D_t$ is the forward image of $D_0$
now follows from the fact that ${\bm x}$, by its definition, leaves $\partial D$ invariant and the existence uniqueness theorem for ODE.

We can now make a precise statement about the density of $\tau$:
\begin{proposition}\label{p:densityformula}
Suppose $a$ is uniformly elliptic and $a$, $b$, $b^*$ and $c^*$ are uniformly Holder continuous.
and let $D$ satisfy the assumptions \ref{as:ebar}. Then the Green's function $G$ is continuously differentiable in $s$
and the exit time $\tau$ has continuous density
\[
f^\tau(x,t,s)= -\int_{D_s}  G_s(x,t,y,s) dy, s \in (t,T].
\]
\end{proposition}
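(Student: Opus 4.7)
The proof will essentially assemble the pieces already laid out in the preceding discussion. The strategy is to start from the survival probability formula
\[
{\mathbb P}_{(x,t)}(\tau > s) = \int_{D_s} G(x,t,y,s)\, dy,
\]
which follows from setting $g \equiv 1$ in the preceding proposition, and then differentiate in $s$ to extract the density. The density is obtained via the chain Lemma \ref{l:contGs} (giving smoothness of $G$ in $s$) together with the Leibniz/Reynolds Transport formula \eqref{e:derofF}, with the boundary contribution killed by the vanishing of $G$ on $S$.

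First I would verify the hypotheses of Lemma \ref{l:contGs}: since $a$, $b^*$ and $c^*$ are uniformly H\"older continuous and $a$ is uniformly elliptic, the adjoint operator ${\mathcal L}^*$ falls under the scope of \cite[Theorem 16, page 82]{friedman}, so the adjoint Green's function $G^*$ exists and is differentiable in its time variable with continuous derivative. Via the duality $G(x,t,y,s) = G^*(y,s,x,t)$ from \cite[Theorem 17, page 84]{friedman}, this transfers to differentiability of $G$ in $s$ with continuous derivative $G_s$. This is the content of Lemma \ref{l:contGs}, so I would simply invoke it.

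Next I would construct (or rather recall the construction of) a smooth flow ${\bm x}(\cdot,t):D_0\to D_t$ generating $D$: using the local graph representation of $\partial D$ from Assumption \ref{as:ebar}, define ${\bm x}$ on $\partial D$ by shifting along the graph coordinate, and then extend to ${\mathbb R}^d$ using the extension lemma for vector fields on submanifolds (as cited in the text). With this flow and the established continuous differentiability of $G$ in $s$, the classical Leibniz / Reynolds Transport formula \eqref{e:derofF} applies, yielding
\[
-\frac{\partial}{\partial s}\int_{D_s} G(x,t,y,s)\, dy = -\int_{D_s} G_s(x,t,y,s)\, dy - \int_{\partial D_s} G(x,t,y,s)\,\langle v, N\rangle \, dS.
\]
By \cite[Corollary 1, page 83]{friedman}, $G(x,t,y,s) = 0$ for $y \in \partial D_s$, so the boundary integral vanishes and we are left with the claimed formula
\[
f^\tau(x,t,s) = -\int_{D_s} G_s(x,t,y,s)\, dy.
\]

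The final step is continuity of $s \mapsto f^\tau(x,t,s)$ on $(t,T]$. Here the flow representation is again useful: changing variables $y = {\bm x}(z,s)$ with $z \in D_0$ turns the integral into an integral over the \emph{fixed} domain $D_0$ of $G_s(x,t,{\bm x}(z,s),s)$ times the Jacobian of the flow, both of which are continuous in $s$. A dominated convergence argument then gives continuity of $f^\tau$ in $s$. The main technical obstacle I anticipate is justifying the interchange of differentiation and integration (i.e., the passage to \eqref{e:derofF}) on a time-varying domain in sufficient generality; this is precisely why the flow representation of $D$ and the continuity of $G_s$ up to the boundary — as guaranteed by the hypotheses through \cite{friedman} — are essential.
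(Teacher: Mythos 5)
Your proposal is correct and follows essentially the same route as the paper's proof: invoke Lemma \ref{l:contGs} for the existence and continuity of $G_s$ via the adjoint Green's function, apply the Leibniz/Reynolds formula with the boundary term annihilated by $G=0$ on the lateral boundary, and obtain continuity of the density from the continuity of $G_s$ together with the smooth-flow representation of $D_t$. The only difference is that you spell out the change of variables to the fixed domain $D_0$ for the continuity step, which the paper leaves implicit.
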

\begin{proof}
The existence and continuity of $G_s$ follows from Lemma \ref{l:contGs}; the density formula follows
from Leibniz's rule and $G=0$ on $\partial D_t$, as discussed above.
The continuity of the density follows from the continuity of $G_s$ and the fact that $D_t$ is the smooth image of
$D_0$ under the flow ${\bm x}.$
\end{proof}

\section{Conclusion}\label{s:conclusion}
The present work finds solutions to BSDE \eqref{eq:bsde} with a superlinear driver with
 singular terminal values of the form ${\bm 1}_A$, $A \in {\mathcal F}_T$. In studying this question it greatly generalizes
the class of events $A$, the assumptions on the driver $f$ as well as the filtration ${\mathcal F}_T$ as compared to the 
previous work \cite{krus:popi:seze:18}, which focused on a deterministic $f$, the filtration generated by a Brownian motion
and $A$ of the form $\{\tau_0 \le T\}$ and $\{\tau_0 > T\}$ where $\tau_0$ is the first exit time of the Brownian motion
from a fixed interval. 
With the results of Section \ref{s:secondcase} we see that under general conditions on the driver and the filtration,
the BSDE \eqref{eq:bsde} with terminal condition ${\bm 1}_A \cdot \infty$ can be solved continuously
for any $A \in {\mathcal F}_T$ that can be written as the limit
of a decreasing sequence of adapted events. The arguments in Section\ref{s:firstcase} imply that for events the form $\{\tau \le T\}$, 
where $\tau$ is a stopping time to obtain continuous solutions
to the BSDE we only need that $\tau$ has a bounded density. Our results in Section \ref{s:density} show that exit times of 
multidimensional Markovian diffusions form time dependent smooth domains satisfy this condition. Despite these generalizations 
the identification of all events $A$ in ${\mathcal F}_T$ for which the BSDE \eqref{eq:bsde} with terminal condition $\infty \cdot {\bm 1}_A$ 
has a continuous solution remains an open problem.
As already noted we rely on the density of $\tau$ in dealing with the event
$A=\{\tau \le T\}$; this reliance brings with it the assumption $p > 2$ when dealing with the terminal condition ${\bm 1}_{A} \cdot \infty.$
To remove this assumption is an open problem for future research.

Another natural direction for future research is the derivation of density formulas for exit
times for more general multidimensional processes, including those with jumps. Once such formulas are available the arguments in
Section \ref{s:firstcase} would imply the existence of continuous solution to BSDE \eqref{eq:bsde} with terminal conditions defined
by these exit times.

All results obtained in this paper can be generalized to the case where the compensator of $\pi$ is random and equivalent to the measure $\mu\otimes dt$ with a bounded density for example (see the introduction of \cite{bech:06} for example). Nevertheless since we refer to \cite{krus:popi:17,krus:popi:14} for the existence and uniqueness of the solution of BSDE, we keep this setting for $\pi$.

\appendix

\section{Proof of the upper bound (\ref{eq:a_priori_estimate_Y_L}) }  

Let us recall the arguments of the proof of \cite[Proposition 2]{krus:popi:15}. For any $k \geq 0$ we consider the BSDE 
\begin{equation*} \label{eq:truncated_bsde}
dY^k_t = -  f^k(t,Y^k_t,Z^k_t,\psi^k_t) dt + Z^k_t dW_t +  \int_\cE \psi^k_t(e) \tpi(de,dt) +dM^k_t
\end{equation*}
with bounded terminal condition $Y^k_T = \xi \wedge k$ and where
\begin{equation}\label{eq:generator_trunc_BSDE}
f^k(t,y,z,\psi) = ( f(t,y,z,\psi)-f^0_t ) + f^0_t \wedge k.
\end{equation}
The solution $Y^k$ is non negative in our setting. 
We also consider the driver
\begin{eqnarray*}
h(t,y,z,\psi) & = &b^k_t - p\frac{1}{T-t} y + [f(t,0,z,\psi)-f^0_t].
\end{eqnarray*}
with $b^k_t =  \frac{((p-1)\eta_t)^p}{(T-t)^p} + (f^0_t \wedge k)$.
Let $\eps > 0$ and denote by $(\cY^{\eps,k},Z^{\eps,k},\phi^{\eps,k},N^{\eps,k})$ the solution process of the BSDE on $[0,T-\eps]$ with driver $h$ and terminal condition $\cY^{\eps,k}_{T-\eps}=Y^{k}_{T-\eps} \geq 0$. Recall that from \ref{A3} and \ref{A4}
$$f(t,0,z,\psi)-f^0_t \leq \beta^{z,\psi}_t z+ \int_\cE \psi(e) \kappa^{0,0,\psi,0}_t(e)  \mu(de),$$
where
$$\beta^{z,\psi}_t = \frac{f(t,0,z,\psi)-f(t,0,0,\psi)}{z \mathbf 1_{z \neq 0}}.$$
From \ref{A4}, $\beta^{z,\psi}$ is a bounded process by $L$.
Hence by a comparison argument with the solution for linear BSDE (see \cite{quen:sule:13}, Lemma 4.1) we have
$$\cY^{\eps,k}_t \leq \bE \left[ \Gamma_{t,T-\eps} Y^{k}_{T-\eps} + \int_t^{T-\eps} \Gamma_{t,s} b^k_s ds \bigg| \cF_t\right]$$
where for $t \leq s \leq T-\eps$
\begin{eqnarray*}
\Gamma_{t,s} & = & \exp\left( -\int_t^s \frac{p}{T-u} du + \int_t^s \beta^{Z^{\eps,k},\phi^{\eps,k}}_u dW_u - \frac{1}{2} \int_t^s (\beta^{Z^{\eps,k},\phi^{\eps,k}}_u)^2 du\right)V^{\eps,k}_{t,s} \\
&=& \left( \frac{T-s}{T-t} \right)^{p}  \exp\left(  \int_t^s \beta^{Z^{\eps,k},\phi^{\eps,k}}_u dW_u - \frac{1}{2} \int_t^s (\beta^{Z^{\eps,k},\phi^{\eps,k}}_u)^2 du\right)V^{\eps,k}_{t,s}
\end{eqnarray*}
and
\begin{equation}\label{eq:auxil_proc_V}
V^{\eps,k}_{t,s} = 1+ \int_t^s \int_{\cZ}  V^{\eps,k}_{t,u^-} \kappa^{0,\phi^{\eps,k},0}_u(z) \tpi(dz,du).
\end{equation}
Hence
$$\cY^{\eps,k}_t \leq \frac{1}{(T-t)^p} \bE \left[\eps^\rho V^{\eps,k}_{t,T-\eps} Y^{k}_{T-\eps} + \int_t^{T-\eps} V^{\eps,k}_{t,s} (T-s)^p b^k_s  ds \bigg| \cF_t\right].$$
Since $b^k \geq 0$ it holds that $\cY^{\eps,k}_t  \geq 0$ a.s. for every $t \in [0,T]$. Hence from Condition \ref{C1}
$$f^k(t,\cY^{\eps,k}_t,Z^{\eps,k}_t,\phi^{\eps,k}_t) \leq - \frac{1}{\eta_t } (\cY^{\eps,k}_t)^{q}+ f^k(t,0,Z^{\eps,k}_t,\phi^{\eps,k}_t).$$
It follows that
\begin{eqnarray*}
f^k(t,\cY^{\eps,k}_t,Z^{\eps,k}_t,\phi^{\eps,k}_t) & \leq & h(t,\cY^{\eps,k}_t,Z^{\eps,k}_t,\phi^{\eps,k}_t)-\frac{1}{\eta_t } (\cY^{\eps,k}_t)^{q} -\frac{((p-1)\eta_t)^p}{(T-t)^p} +\frac{p}{T-t} \cY^{\eps,k}_t \\
&\leq& h(t,\cY^{\eps,k}_t,Z^{\eps,k}_t,\phi^{\eps,k}_t),
\end{eqnarray*}
where we used the Young inequality: $c^p+(p-1)y^q-p cy\ge 0$ which holds for all $c,y\ge 0$. The comparison theorem implies $Y^{k}_t \leq \cY^{\eps,k}_t$ for all $t\in[0,T-\eps]$ and $\eps > 0$.

Recall once again from Condition \ref{C3}, then $V^{\eps,L}_{t,.}$ belongs to $\bH^\varpi(0,T-\eps)$ for some $\varpi \geq 2$.
From the upper bound $Y^{k}_t  \leq k(T+1)$ and from the integrability property of $V^{\eps,k}_{t,.}$, with dominated convergence, by letting $\eps \downarrow 0$ we obtain a.s.
$$\bE \left[\eps^p V^{\eps,k}_{t,T-\eps} Y^{k}_{T-\eps}  \bigg| \cF_t\right] \longrightarrow 0.$$
From Assumption \ref{C3}, by the proof of Proposition A.1 in \cite{quen:sule:13}, there exists a constant $K_{\vartheta,L,\ell'}$ such that a.s.
$$\bE \left[\int_t^{T-\eps} (V^{\eps,k}_{t,s})^{\frac{\ell'}{\ell'-1}}  ds\bigg| \cF_t\right] \leq (K_{\vartheta,L,\ell'})^{(\ell'-1)/\ell'}.$$
From Conditions \ref{C2} and \ref{C4}, it follows that the process $((T-t)^p b^k_t, \ 0\leq t \leq T)$ belongs to $\bH^{\ell'}(0,T)$ for any $1 < \ell' \leq \ell$. Therefore by H\"older's inequality we obtain
\begin{eqnarray*}
&& \bE \left[\int_t^{T-\eps} V^{\eps,k}_{t,s}(T-s)^p b^k_s  ds \bigg| \cF_t\right] \leq K_{\vartheta,L,\ell'} \bE \left[\int_t^{T} ((T-s)^p b^k_s)^{\ell'}  ds \bigg| \cF_t\right]^{1/\ell'}.
\end{eqnarray*}
Hence we can pass to the limit as $\eps \downarrow 0$
\begin{eqnarray*}
Y^{k}_t & \leq & \frac{K_{\vartheta,L,\ell'}}{(T-t)^p} \bE \left[ \ \int_t^{T}  ((T-s)^p b^k_s)^{\ell'} ds \bigg| \cF_t\right]^{1/\ell'}.
\end{eqnarray*}
Assumptions \ref{C2} and \ref{C4} imply by monotone convergence for $k \to \infty$
\begin{eqnarray*}
Y^{k}_t & \leq &  \frac{K_{\vartheta,L,\ell'}}{(T-t)^p} \bE \left[ \ \int_t^{T} \left( ((p-1)\eta_s)^p + (T-s)^p (f^0_s) \right)^{\ell'} ds \bigg| \cF_t\right]^{1/\ell'} <+\infty
\end{eqnarray*}
Using again H\"older's inequality for the conditional expectation, we obtain the upper bound in \eqref{eq:a_priori_estimate_Y_L}.

\bibliography{biblio}

\end{document}